\newtheorem{assumption}{Assumption}
\newcommand{\ie}{{\emph{i.e., }}}
\newcommand{\eg}{{\emph{e.g., }}}
\newcommand{\ubar}[1]{\text{\b{$#1$}}}
\newcommand{\norm}[1]{\left\lVert#1\right\rVert}
\newcolumntype{Y}{>{\centering\arraybackslash}X}
\theoremstyle{plain}
\newtheorem{theorem}{Theorem}
\newtheorem{corollary}{Corollary}
\newtheorem{lemma}{Lemma}
\newtheorem{proposition}{Proposition}
\theoremstyle{remark}
\newtheorem{remark}{Remark}
\theoremstyle{definition}
\newtheorem{definition}{Definition}
\newcommand{\R}{\mathbf{R}}
\begin{document}


\title{$P$-split formulations: A class of intermediate formulations between big-M and convex hull for disjunctive constraints}

\author{Jan Kronqvist\thanks{To whom all correspondence should be addressed.} \\Department of Mathematics\\ KTH Royal Institute of Technology\\ Stockholm, Sweden\\
\texttt{jankr@kth.se}
\And Ruth Misener\\
Department of Computing\\ Imperial College London
\\London, UK\\
\texttt{r.misener@imperial.ac.uk}
\And Calvin Tsay\\
Department of Computing\\ Imperial College London
\\London, UK\\
\texttt{c.tsay@imperial.ac.uk}}

\maketitle
\begin{abstract}
We develop a class of mixed-integer formulations for disjunctive constraints intermediate to the big-M and convex hull formulations in terms of relaxation strength. The main idea is to capture the best of both the big-M and convex hull formulations: a computationally light formulation with a tight relaxation. The ``$P$-split'' formulations are based on a lifted transformation that splits convex additively separable constraints into $P$ partitions and forms the convex hull of the linearized and partitioned disjunction. The ``$P$-split'' formulations are derived for disjunctive constraints with convex constraints within each disjunct, and we generalize the results for the case with nonconvex constraints within the disjuncts. We analyze the continuous relaxation of the $P$-split formulations and show that, under certain assumptions, the formulations form a hierarchy starting from a big-M equivalent and converging to the convex hull. We computationally compare the  $P$-split formulations against big-M and convex hull formulations on 344 test instances. The test problems include K-means clustering, semi-supervised clustering, P\_ball problems, and optimization over trained ReLU neural networks. The computational results show promising potential of the $P$-split formulations.  For many of the test problems, $P$-split formulations are solved with a similar number of explored nodes as the convex hull formulation, while reducing the solution time by an order of magnitude and outperforming big-M both in time and number of explored nodes.\end{abstract}

\keywords{
Disjunctive programming \textbullet{} Mixed-integer formulations \textbullet{}  Mixed-integer programming \textbullet{} Disjunctive constraints \textbullet{}  Convex MINLP
}

\section{Introduction}

Many optimization problems have a clear disjunctive structure, where one decision leads to some constraints and another decision results in another set of constraints. In fact, any convex mixed-integer program (MIP), whether linear or convex nonlinear, can be viewed as a disjunctive program with the single disjunctive constraint that the solution must be within at least one convex set out of a collection of convex sets, \ie within the union of the convex sets
\cite{Balas2018}. Disjunctive constraints are in general nonconvex, as the union of convex sets is generally not a convex set. Handling these nonconvex disjunctive constraints is a fundamental question in MIP. Disjunctive programming  was pioneered by Balas \cite{balas1974disjunctive,balas1979disjunctive,balas1985disjunctive,balas1998disjunctive} and generalized by Grossmann and coworkers \cite{grossmann2003generalized,lee2000new,raman1994modelling,ruiz2012hierarchy,Trespalacios2014}. In general, disjunctive programming offers an alternative view of MIP that may be valuable for analyzing problems, creating strong extended formulations \cite{balas1998disjunctive}, and deriving cutting planes \cite{balas1993lift,balas2022v,chen2024sparse,kronqvist2020disjunctive,trespalacios2016cutting}. However, the best encoding of a disjunctive program, or disjunctive constraint(s), as an MIP is an open question.

The classic alternatives for expressing a disjunctive constraint as a MIP, the \textit{big-M} and \textit{convex hull} formulations, come with well-known trade-offs between relaxation strength and problem size. Convex hull formulations \cite{balas1998disjunctive,ben2001lectures,ceria1999convex,helton2009sufficient,jeroslow1984modelling,stubbs1999branch} provide a \textit{sharp formulation} \cite{vielma2015mixed} for a single disjunction, \ie the continuous relaxation provides the best possible lower bound. The convex hull of a disjunction is often represented by an \emph{extended formulation} \cite{Balas2018,bonami2015mathematical,conforti2008compact,grossmann2003generalized,gunluk2010perspective,hijazi2012mixed,vielma2015mixed} which introduces multiple copies of each variable in the disjunction. Extended formulations are also called ``perspective'' or ``multiple-choice'' formulations. 
On the other hand, big-M formulations only introduce one binary variable for each disjunct, resulting in smaller problems in terms of both number of variables and constraints. The big-M formulation also preserves the constraint expressions, whereas the extended convex hull builds upon the perspective function, which can result in numerically more challenging constraints. In general, big-M formulations provide weaker continuous relaxations than the convex hull and may require a solver to explore significantly more nodes in a branch-and-bound tree \cite{conforti2014integer,vielma2015mixed}. 
But the computationally simpler subproblems of the big-M formulation can offset the larger number of explored nodes. Anderson et al.\ \cite{anderson2020strong} mention a folklore observation that extended convex hull formulations tend to perform worse than expected. The observation is supported by past numerical results \cite{anderson2020strong,kronqvist2021between,trespalacios2015algorithmic,tsay2021partition}, as well as this paper. 

This paper introduces a new class of formulations for encoding disjunctive constraints in a MIP framework, which we term \textit{$P$-split formulations}. The $P$-split formulations lie between the big-M and convex hull in terms of relaxation strength and attempt to combine the best of both worlds: a tight, yet computationally light, formulation. The main idea behind the $P$-split formulations is to perform a lifting transformation, partitioning the constraints of each disjunct to both linearize the disjunction and reduce the number of variables in the disjunction. Forming the convex hull of the resulting linear disjunctions results in a smaller problem, while retaining some key features of the convex hull. We call the formulations $P$-split, since the constraints are `split' into $P$ parts. Efficient convex hull formulations have been actively researched \cite{balas1988convex,bernal2024convex,conforti2008compact,jeroslow1988simplification,sawaya2007computational,trespalacios2015algorithmic,vielma2019small,vielma2011modeling,vielma2010mixed}, and several techniques for deriving the convex hull of MIPs have been presented \cite{balas1985disjunctive,lasserre2001explicit,lovasz1991cones,ruiz2012hierarchy,sherali1990hierarchy}. 
The motivation behind the $P$-split formulations is different: the goal is not to obtain a formulation whose continuous relaxation forms the convex hull of the disjunction. Instead, the $P$-split formulations provide a straightforward framework for generating a class of formulations whose relaxations approximate the convex hull (for a general class of disjunctive constraints) using a smaller, computationally more tractable, problem formulation. A hierarchy of $P$-split formulations can be obtained by adjusting $P$, where larger $P$ values result in stronger relaxations at the cost of increased problem size. Our computational results show that intermediate $P$-split formulations can offer a significant advantage over both the big-M and convex hull formulations. 
The $P$-split formulations are particularly well-suited for problems with many variables in the constraints of each disjunct, but with only a few constraints per disjunct. Problems with such a structure include, \eg machine learning applications such as clustering \cite{maimon2005data,papageorgiou2018pseudo} and optimizing over trained neural networks \cite{haddad2022verification,huchette2023deep,papalexopoulos2021constrained,patel2022neur2sp}. 

Related work includes research on deriving formulations stronger than big-M without introducing auxiliary variables, such as improved big-M \cite{trespalacios2015improved}, hybrid formulations \cite{vielma2015mixed}, and formulations based on the Cayley embedding \cite{vielma2019small}. Some formulations also build upon specific problems structures such as, indicator constraints \cite{bonami2015mathematical,hijazi2012mixed} and constraints with multiple right-hand sides \cite{jeroslow1988simplification}. A detailed overview of alternative formulations is given in the review paper \cite{vielma2015mixed}.

This paper builds on our conference paper \cite{kronqvist2021between} and offers a significant extension both in the fundamental theory and computational analysis. Specifically, we generalize the results of \cite{kronqvist2021between} to problems with multiple constraints per disjunct and nonconvex constraints, we generalize and refine the mathematical proofs, we provide a comprehensive analysis of the relaxation strength, we identify circumstances where a $P$-split formulation cannot recover the convex hull of the disjuctive constraint, we include extended computational tests, and we overall provide a more complete discussion of theoretical and practical details. The main result presented in \cite{kronqvist2021between}, that $P$-split formulations form a hierarchy in terms of relaxation strength, was only shown for a two-term disjunction with a single constraint per disjunct. 
The formulations for this case are now a Pyomo.GDP 
\cite{chen2021pyomo}\footnote{\url{https://github.com/Pyomo/pyomo/pull/2221}; accessed 05/2025} plugin. 
This work extends $P$-split formulations to general disjunctions with multiple constraints and/or disjunctive terms. We discuss how the formulation can be strengthened with locally valid bounds, \ie additional linear inequality constraints, and give implementation guidelines for the resulting formulations. We expand the numerical study from 12 to 344 instances \cite{kronqvist2021between}.  

In this paper, Section~\ref{sec:2} presents the background and assumptions upon which the $P$-split formulations are built. Section~\ref{sec:3} introduces the $P$-split formulations for the case with a single constraint per disjunct, which simplifies the derivations and analysis. Section~\ref{sec:4} covers the general case, together with properties of the $P$-split relaxations and how they compare to the big-M and convex hull relaxations. Section~\ref{sec:4} also presents linking constraints that can be used to further strengthen the formulations for disjuncts with multiple constraints, along with conditions for avoiding redundant linking constraints. Section~\ref{sec:5} shows that the $P$-split framework can directly be applied to disjunctive constraints with separable nonconvex constraint functions if used in conjunction with convex underestimators, and generalizes the main results from Section~\ref{sec:4}. 
Section~\ref{sec:6} describes some computational considerations. Section~\ref{sec:7} presents a numerical comparison of the formulations, using instances with both linear and nonlinear disjunctions.

\section{Background}\label{sec:2}
We consider mixed-integer (nonlinear), or MI(N)LP, problems with disjunctions
\begin{equation}
\begin{aligned}\label{eq:main_disjunction}
    &\underset{l \in \mathcal{D}}{\lor} \begin{bmatrix} g_{k}(\boldsymbol{x}) \leq b_{k} \quad \forall k \in \mathcal{C}_{l}\\
    \boldsymbol{x} \in \mathcal{X} 
    \end{bmatrix}\\
\end{aligned}
\end{equation}
where $\mathcal{X} \subset \mathbb{R}^n$ is a convex compact set, $\mathcal{D}$ contains the disjunct indices, and $\mathcal{C}_{l}$ the constraint indices in disjunct $l$. This paper focuses on modeling Disjunction \eqref{eq:main_disjunction}, which is embedded in a larger MINLP. So, forming the convex hull of \eqref{eq:main_disjunction} may not recover the convex hull of the entire mixed-integer problem.
To derive the $P$-split formulations, we make the following assumptions. 

\begin{assumption}
The functions $g_{k}: \mathbb{R}^n \rightarrow \mathbb{R} $ are convex additively separable functions, \ie  $g_{k}(\boldsymbol{x}) = \sum_{i=1}^n h_{i,k}(x_i)$ where $h_{i,k}: \mathbb{R} \rightarrow \mathbb{R} $ are convex functions, and each disjunct is non-empty.   
\end{assumption}
\begin{assumption}
All functions $g_k$ are bounded over $\mathcal{X}$.
\end{assumption}
\begin{assumption}
$P$-split formulations are implemented with a minimal number of auxiliary variables; see Remark 1 for more details.
\end{assumption}
\begin{assumption}
Each disjunct contains (significantly) fewer constraints than the number of variables in the disjunction, \ie $\left| \mathcal{C}_{l}\right| << n, \ \ \forall l \in \mathcal{D
} $.
\end{assumption}

Assumptions 1--4 ensure certain properties of the resulting $P$-split formulations and simplify the derivation. 
Applications satisfying Assumptions 1--4 include clustering \cite{papageorgiou2018pseudo,sauglam2006mixed}, mixed-integer classification \cite{liittschwager1978integer,rubin1997solving}, optimization over trained neural networks \cite{anderson2020strong,botoeva2020efficient,ceccon2022omlt,grimstad2019relu,serra2020lossless,papalexopoulos2021constrained,tsay2021partition}, and coverage optimization \cite{huang2005coverage}. 

Assumptions 1--4 are sufficient to apply a $P$-split formulation but not necessary. For example, Assumption 4 is technically not needed, but rather characterizes favorable problem structures. Assumption~1 simplifies our derivations, but it can be relaxed because the formulations do not require the constraint functions to be fully separable.  
Constraints that are \textit{partially} additively separable (see Definition 1 below) would suffice. 
Likewise, Section~5 shows that $P$-split formulations can be applied to \textit{non-convex} additively separable functions in conjunction with convex underestimators.   
\begin{definition}
\cite{kronqvist2017reformulations} A function $g: \mathbb{R}^n \rightarrow \mathbb{R}$ is convex \textit{partially additively separable}, if it can be written as
$
    g(\boldsymbol{x})= \sum_{i \in \mathcal{M
    }} h_i(\boldsymbol{y}_i), 
$
where each function $h_i$ is convex, and the vectors $\boldsymbol{y}_i$ contain strict subsets of the components in $\boldsymbol{x}$, \ie each $\boldsymbol{y}_i$ are subvectors of $\boldsymbol{x}$ with a strictly smaller number of components than $\boldsymbol{x}$
\end{definition}

\section{Formulations between convex hull and big-M}\label{sec:3}
This section constructs the $P$-split formulations and analyzes the strength of the resulting continuous relaxations. For simplicity, this section considers disjunctions with one constraint per disjunct, \ie  $|\mathcal{C}_{l}| = 1, \ \forall l \in  \mathcal{D}$. Section~\ref{sec:4} extends the results to multiple constraints per disjunct.
The key idea is to partition the variables into $P$ sets, with the corresponding index sets $\mathcal{I}_1, \dots, \mathcal{I}_P$, to lift the disjunction into higher dimensional space in a disaggregated form. Based on the variable partitioning, we introduce $P\times |\mathcal{D}|$ auxiliary variables $\alpha_s^l \in \mathbb{R
 }$ along with the constraints 
\begin{equation}
    \label{eq:alpha_vars}
     \underset{i \in \mathcal{I}_s
     }{\sum}h_{i,l}(x_i) \leq \alpha_s^l \quad \quad \forall s\in \{1, \dots, P\}, \ \forall l \in \mathcal{D}.
\end{equation}
The partitioning must be chosen such that all variables are included in exactly one partitioning, \ie  $\mathcal{I}_1\cup \dots \cup \mathcal{I}_P = \{1, \dots, n\}$ and $\mathcal{I}_i \cap \mathcal{I}_j = \emptyset, i \neq j$. Section~\ref{sec:6} discusses partitioning strategies. If $\sum_{i \in \mathcal{I}_s
     } h_{i,l}(x_i)$ is affine for any $s\in \{1,\dots,P\}$, then \eqref{eq:alpha_vars} can be strengthened to an equality constraint without losing convexity. Auxiliary variables $\alpha_s^l$ and Constraints~\eqref{eq:alpha_vars} allow us to reformulate Disjunction \eqref{eq:main_disjunction} into a disaggregated form
\begin{equation}
\label{eq:main_disjunction_lifted}
\begin{matrix}
\begin{aligned}
    &\underset{l \in \mathcal{D}}{\lor} \begin{bmatrix}  
    g_l(\boldsymbol{x}) \leq b_l\\
      \boldsymbol{x} \in \mathcal{X} 
    \end{bmatrix}\\ 
  
    \end{aligned}
\end{matrix}
\quad \quad 
\longrightarrow  \quad \quad  
\begin{matrix}
\begin{aligned}
    &\underset{l \in \mathcal{D}}{\lor} 
    \begin{bmatrix}
    \begin{aligned}
    &\underset{i \in \mathcal{I}_1}{\sum}h_{i,l}(x_i) \leq \alpha^l_1\\[-0.1cm]
    & \quad \quad \quad \vdots \\
    \vspace{0.1cm}
    &\underset{i \in \mathcal{I}_P}{\sum}h_{i,l}(x_i) \leq \alpha^l_P\\
    \vspace{0.1cm}
    &\sum_{s=1}^P \alpha^l_s  \leq b_l\\
    &\boldsymbol{x} \in \mathcal{X}, \boldsymbol{\alpha}^l \in \mathbb{R}^{P}\ \forall\ l \in \mathcal{D}
    \end{aligned}
    \end{bmatrix} 
    .  
    \end{aligned}
\end{matrix}
\end{equation}
Reformulation~\eqref{eq:main_disjunction_lifted} splits the constraint in the disjunction into $P$ parts. 
 \begin{remark}
 If the same (or scaled) sum of functions ${\sum}_{i \in \mathcal{I}_s}h_{i,l}(x_i)$ factor appears in multiple disjuncts, then it can be constrained by a single $\alpha$ variable across those disjuncts. Introducing unnecessary $\alpha$ variables can produce weaker relaxations: this is an observation familiar in the interval arithmetic and factorable programming literature \cite{misener2014antigone,vigerske2018scip}. Assumption 3 states that we introduce the fewest $\alpha$ variables for the given variable partitioning. 
 \end{remark}
 By Assumption 2 (boundedness), we define bounds on the auxiliary variables  
\begin{equation}
\label{eq:bounds_alpha}
    \begin{aligned}
        & \ubar{\alpha}^l_s := \min_{\boldsymbol{x} \in \mathcal{X}} \underset{i \in \mathcal{I}_s}{\sum}h_{i,l}(x_i), \quad & \bar{\alpha}^l_s := \max_{\boldsymbol{x} \in \mathcal{X}} \underset{i \in \mathcal{I}_s}{\sum}h_{i,l}(x_i) .
    \end{aligned}
\end{equation}
The $P$-split formulations do not require tight bounds, but weak bounds produce weaker relaxations. Note that stronger bounds could be obtained by optimizing over the disjunctive constraint and not over the superset $\mathcal{X}$. We use the bounds as defined in \eqref{eq:bounds_alpha}, as these are in general computationally more tractable. 

The two formulations of the disjunction in~\eqref{eq:main_disjunction_lifted} have the same feasible set in the $\boldsymbol{x}$-variable space. Next, we relax the disjunction by defining all constraints containing the $\boldsymbol{x}$ variables as global constraints. With the auxiliary variable bounds, the disjunction becomes
\begin{equation}
\label{eq:main_disjunction_splitted}
\begin{aligned}
    &\underset{l \in \mathcal{D}}{\lor} \begin{bmatrix}
    \begin{aligned}
    &\sum_{s=1}^P \alpha^l_s  \leq b_l\\
    &\ubar{\alpha}^l_s\leq \alpha^l_s \leq \bar{\alpha}^l_s  \quad \forall s \in \{1,\dots, P\}
    \end{aligned}
    \end{bmatrix} 
    \\
    &\underset{i \in \mathcal{I}_s}{\sum}h_{i,l}(x_i) \leq \alpha^l_s  &&\forall s \in \{1,\dots, P\}, \ \forall \ l \in \mathcal{D} \\
    &\boldsymbol{x} \in \mathcal{X}, \boldsymbol{\alpha}^l \in \mathbb{R}^{P}\ &&\forall \ l \in \mathcal{D}.  
    \end{aligned}
\end{equation}
\begin{definition}
Formulation \eqref{eq:main_disjunction_splitted} is a $P$-split representation of Disjunction \eqref{eq:main_disjunction}.
\end{definition}
Reformulating Disjunction~\eqref{eq:main_disjunction} into the $P$-split representation effectively linearizes the disjunction. Linearizing the disjunction allows us avoid the numerical difficulties related to division by zero arising from forming the convex hull of a general nonlinear disjunction \cite{sawaya2007computational}. The drawback of this linearization is that the continuous relaxation of resulting formulations may not be as tight as possible when projected to the x-space, \ie may not be sharp. 
Section \ref{sec:properties_one} analyzes the strength of the continuous relaxation.

\begin{lemma}\label{lemma_feasset}
The feasible set of $P$-split representation \eqref{eq:main_disjunction_splitted} projected onto the $\boldsymbol{x}$-space is equal to the feasible set of Disjunction~\eqref{eq:main_disjunction}. 
\end{lemma}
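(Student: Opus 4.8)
The plan is to establish the two set inclusions separately. Throughout I use that the partition $\mathcal{I}_1,\dots,\mathcal{I}_P$ is exhaustive and pairwise disjoint, so that $\sum_{s=1}^{P}\sum_{i\in\mathcal{I}_s}h_{i,l}(x_i)=\sum_{i=1}^{n}h_{i,l}(x_i)=g_l(\boldsymbol{x})$ for every $l\in\mathcal{D}$, where the last equality is Assumption~1 (additive separability); the single-constraint-per-disjunct convention of this section lets me write the disjunct-$l$ constraint simply as $g_l(\boldsymbol{x})\le b_l$.

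For the inclusion ``$\subseteq$'', take any $\boldsymbol{x}$ for which there exist auxiliary vectors $\{\boldsymbol{\alpha}^l\}_{l\in\mathcal{D}}$ making $(\boldsymbol{x},\{\boldsymbol{\alpha}^l\})$ feasible for \eqref{eq:main_disjunction_splitted}. Then $\boldsymbol{x}\in\mathcal{X}$, and, because the bracketed disjunction in \eqref{eq:main_disjunction_splitted} must hold for at least one index, pick $l^{*}\in\mathcal{D}$ with $\sum_{s=1}^{P}\alpha_s^{l^{*}}\le b_{l^{*}}$. The linearizing (``global'') constraints are imposed for every $s$ and every $l$, in particular for $l=l^{*}$, giving $\sum_{i\in\mathcal{I}_s}h_{i,l^{*}}(x_i)\le\alpha_s^{l^{*}}$ for all $s$; summing over $s$ and using the identity above yields $g_{l^{*}}(\boldsymbol{x})\le\sum_{s=1}^{P}\alpha_s^{l^{*}}\le b_{l^{*}}$, so $\boldsymbol{x}$ satisfies disjunct $l^{*}$ of \eqref{eq:main_disjunction} and $\boldsymbol{x}\in\mathcal{X}$. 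The bound constraints on $\boldsymbol{\alpha}^{l^{*}}$ are not needed in this direction.

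For the reverse inclusion ``$\supseteq$'', take $\boldsymbol{x}$ feasible for \eqref{eq:main_disjunction}: then $\boldsymbol{x}\in\mathcal{X}$ and $g_{l^{*}}(\boldsymbol{x})\le b_{l^{*}}$ for some $l^{*}\in\mathcal{D}$. I construct the auxiliary variables explicitly by setting $\alpha_s^{l}:=\sum_{i\in\mathcal{I}_s}h_{i,l}(x_i)$ for every $s$ and \emph{every} $l\in\mathcal{D}$, which makes all global constraints hold with equality. For the chosen index $l^{*}$, summing over $s$ gives $\sum_{s=1}^{P}\alpha_s^{l^{*}}=g_{l^{*}}(\boldsymbol{x})\le b_{l^{*}}$, and each $\alpha_s^{l^{*}}$ lies in $[\underline{\alpha}_s^{l^{*}},\bar{\alpha}_s^{l^{*}}]$ directly from the definition \eqref{eq:bounds_alpha} of these bounds as the minimum and maximum of $\sum_{i\in\mathcal{I}_s}h_{i,l^{*}}$ over $\mathcal{X}$ (finite by Assumption~2). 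Hence the bracketed disjunct $l^{*}$ holds, so $(\boldsymbol{x},\{\boldsymbol{\alpha}^l\})$ is feasible for \eqref{eq:main_disjunction_splitted} and $\boldsymbol{x}$ lies in its $\boldsymbol{x}$-space projection.

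There is no genuinely hard step. The only point requiring a moment's care is that the linearizing constraints $\sum_{i\in\mathcal{I}_s}h_{i,l}(x_i)\le\alpha_s^{l}$ are enforced for \emph{all} disjuncts $l$, including the inactive ones; but these are always satisfiable, since for $l\ne l^{*}$ the vector $\boldsymbol{\alpha}^{l}$ ranges freely over $\mathbb{R}^{P}$ and may simply be set equal to (or above) the corresponding partial sums. An alternative route would be to invoke the $\boldsymbol{x}$-space equivalence of \eqref{eq:main_disjunction} and \eqref{eq:main_disjunction_lifted} noted in the text and then argue only that relaxing the split constraints to global constraints (passing from \eqref{eq:main_disjunction_lifted} to \eqref{eq:main_disjunction_splitted}) leaves the $\boldsymbol{x}$-projection unchanged, but the direct two-inclusion argument above is shorter and self-contained.
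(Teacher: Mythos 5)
Your proof is correct and follows essentially the same two-inclusion argument that the paper sketches: summing the global splitting constraints over $s$ for the active disjunct in one direction, and explicitly setting $\alpha_s^l=\sum_{i\in\mathcal{I}_s}h_{i,l}(x_i)$ in the other. You simply spell out the details (including the bound constraints via \eqref{eq:bounds_alpha}) that the paper's one-sentence-per-direction proof leaves as ``trivially''.
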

\begin{proof}
A variable combination $\bar{\boldsymbol{x}}, \bar{\boldsymbol{\alpha}}^1, \dots,  \bar{\boldsymbol{\alpha}}^{|\mathcal{D}|}$ that is feasible for \eqref{eq:main_disjunction_splitted} must also satisfy $g_l(\bar{\boldsymbol{x}}) \leq b_l$ for at least one $l \in \mathcal{D}$, \ie satisfy the original disjunction in \eqref{eq:main_disjunction_lifted}. Similarly, for an $\Tilde{\boldsymbol{x}}$ satisfying the constraints of the original disjunction in \eqref{eq:main_disjunction_lifted} we can trivially find $\alpha$ variables such that all the constraints in \eqref{eq:main_disjunction_splitted} are satisfied.
\end{proof}

Using the extended formulation \cite{balas1998disjunctive} to represent the convex hull of the disjunction in \eqref{eq:main_disjunction_splitted} results in the \textit{$P$-split formulation}

\allowdisplaybreaks{
\begin{equation}
\label{eq:p-split}
\tag{$P$-split}
\begin{aligned}
& \alpha^l_s = \underset{d \in \mathcal{D}}{\sum} \nu^{\alpha^l_s}_d && \forall \ s \in \{1, \dots, P\}, \ \forall\ l \in \mathcal{D} \\
& \sum_{s=1}^P \nu^{\alpha^l_s}_l  \leq b_l\lambda_l &&\forall\ l \in \mathcal{D}\\
& \ubar{\alpha}^l_s\lambda_d \leq \nu^{\alpha^l_s}_d \leq \bar{\alpha}^l_s\lambda_d  &&\forall \ s \in \{1, \dots, P\},\forall\ l,d \in \mathcal{D}\\
 &\underset{i \in \mathcal{I}_s}{\sum}h_{i,l}(x_i) \leq \alpha^l_s  &&\forall\ s \in \{1,\dots, P\}, \ \forall \ l \in \mathcal{D} \\
 & \underset{l \in \mathcal{D}}{\sum}\lambda_l = 1, \quad   \boldsymbol{\lambda} \in \{0, 1\}^{|\mathcal{D}|}\\
&\boldsymbol{x} \in \mathcal{X}, \boldsymbol{\alpha}^l \in \mathbb{R}^{P},  \ \boldsymbol{\nu}^{\alpha^l_s} \in \mathbb{R}^{|\mathcal{D}|} &&\forall\ s \in \{1,\dots, P\}, \ \forall \ l \in \mathcal{D}\ ,
\end{aligned}
\end{equation}}

which form convex MIP constraints.  To clarify our terminology, $P$ can be assigned an integer value: a 3-split formulation is a formulation \eqref{eq:p-split} where the constraints of the original disjunction are split into three parts ($P = 3$). Assuming the original disjunction is part of a larger optimization problem that may contain multiple disjunctions, we enforce integrality of the $\lambda$-variables even if we obtain the convex hull of the disjunction. Proposition \ref{prop_feasset} shows the correctness of the \eqref{eq:p-split} formulation of the original disjunction.

\begin{proposition}\label{prop_feasset}
The feasible set projected onto the $\boldsymbol{x}$-space of formulation \eqref{eq:p-split} is equal to the feasible set of the original  disjunction in
~\eqref{eq:main_disjunction_lifted}.
\end{proposition}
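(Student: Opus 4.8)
The plan is to prove the two set inclusions directly, leaning on the observation that \eqref{eq:p-split} is exactly Balas's extended (disaggregated) convex-hull reformulation of the $P$-split representation \eqref{eq:main_disjunction_splitted}, with the split constraints $\sum_{i\in\mathcal I_s}h_{i,l}(x_i)\le\alpha^l_s$ kept global and the integrality on $\boldsymbol\lambda$ retained. The structural fact I would use throughout is that $\boldsymbol\lambda\in\{0,1\}^{|\mathcal D|}$ together with $\sum_l\lambda_l=1$ forces exactly one $\lambda_m=1$ and all other $\lambda_d=0$.

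For the inclusion ``$\subseteq$'', I would take a point $(\bar{\boldsymbol x},\bar{\boldsymbol\alpha},\bar{\boldsymbol\nu},\bar{\boldsymbol\lambda})$ feasible for \eqref{eq:p-split} and let $m$ be the index with $\bar\lambda_m=1$. For $d\ne m$ the bound constraints $\ubar{\alpha}^l_s\lambda_d\le\nu^{\alpha^l_s}_d\le\bar\alpha^l_s\lambda_d$ collapse to $\bar\nu^{\alpha^l_s}_d=0$, so the aggregation equalities give $\bar\alpha^l_s=\bar\nu^{\alpha^l_s}_m$ for all $l,s$. The constraint $\sum_{s=1}^P\nu^{\alpha^m_s}_m\le b_m\lambda_m$ then reads $\sum_{s=1}^P\bar\alpha^m_s\le b_m$, and combining this with the global split constraints $\sum_{i\in\mathcal I_s}h_{i,m}(\bar x_i)\le\bar\alpha^m_s$ and the additive separability of $g_m$ yields $g_m(\bar{\boldsymbol x})\le\sum_{s=1}^P\bar\alpha^m_s\le b_m$; together with $\bar{\boldsymbol x}\in\mathcal X$ this places $\bar{\boldsymbol x}$ in disjunct $m$ of \eqref{eq:main_disjunction_lifted}.

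For ``$\supseteq$'', given $\tilde{\boldsymbol x}\in\mathcal X$ with $g_m(\tilde{\boldsymbol x})\le b_m$ for some $m$, I would exhibit an explicit feasible lift: set $\lambda_m=1$ and $\lambda_d=0$ for $d\ne m$; set $\alpha^l_s=\sum_{i\in\mathcal I_s}h_{i,l}(\tilde x_i)$ for all $l,s$; set $\nu^{\alpha^l_s}_m=\alpha^l_s$ and $\nu^{\alpha^l_s}_d=0$ for $d\ne m$. Then the split constraints hold with equality; the box bounds $\ubar{\alpha}^l_s\le\alpha^l_s\le\bar\alpha^l_s$ hold by the definition \eqref{eq:bounds_alpha} because $\tilde{\boldsymbol x}\in\mathcal X$; the aggregation equalities hold by construction; and $\sum_s\nu^{\alpha^l_s}_l\le b_l\lambda_l$ is trivial for $l\ne m$ (both sides zero) and reduces to $g_m(\tilde{\boldsymbol x})\le b_m$ for $l=m$. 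Hence $\tilde{\boldsymbol x}$ lies in the $\boldsymbol x$-projection.

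I expect the argument to be essentially routine; the one point that deserves care is that \eqref{eq:p-split} enforces the box bounds on $\alpha^l_s$ for \emph{every} disjunct $l$, not only the active one, whereas \eqref{eq:main_disjunction_splitted} requires them only inside the active disjunct. This is harmless for the $\boldsymbol x$-projection precisely because $\sum_{i\in\mathcal I_s}h_{i,l}(x_i)\in[\ubar{\alpha}^l_s,\bar\alpha^l_s]$ for every $\boldsymbol x\in\mathcal X$, so an admissible value of $\alpha^l_s$ inside the box always exists -- which is exactly what makes the ``$\supseteq$'' construction go through. A more compact alternative would be to cite the standard correctness of Balas's extended formulation with binary $\boldsymbol\lambda$ to conclude that \eqref{eq:p-split} reproduces \eqref{eq:main_disjunction_splitted} in $(\boldsymbol x,\boldsymbol\alpha)$-space, and then invoke Lemma~\ref{lemma_feasset} together with the fact that, since $|\mathcal C_l|=1$ in this section, the original disjunction \eqref{eq:main_disjunction} coincides with the left-hand side of \eqref{eq:main_disjunction_lifted}.
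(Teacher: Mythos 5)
Your proof is correct. The paper itself gives no argument here --- it states only that the result ``trivially follows from Lemma~\ref{lemma_feasset}'', which is precisely the compact alternative you sketch in your final paragraph; your detailed two-inclusion verification (collapsing the disaggregated variables via the active $\lambda_m$, and the explicit feasible lift for the converse) simply writes out what the paper leaves implicit, and every step checks out, including the observation that the box bounds on $\alpha^l_s$ for inactive disjuncts are always satisfiable because $\boldsymbol{x}\in\mathcal{X}$.
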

\begin{proof}
Omitted as it trivially follows from Lemma \ref{lemma_feasset} and basic properties of the extended convex hull formulation \cite{balas1998disjunctive}.
\end{proof}
Proposition \ref{prop_feasset} states that the \eqref{eq:p-split} formulation is correct for integer feasible solutions, but it does not provide any insight on the strength of its continuous relaxation. The following subsections further analyze the properties of the \eqref{eq:p-split} formulation and its relation to the big-M and convex hull formulations. 

\begin{remark}
If each $\alpha$ variable appears only in one disjunct, the \eqref{eq:p-split} formulation introduces $P\cdot \left(|\mathcal{D}|^2 +|\mathcal{D}| \right)$ continuous  and $|\mathcal{D}|$ binary variables.  Unlike the extended convex hull formulation, the number of \say{auxiliary} variables is independent of $n$, \ie the number of variables in the original disjunction. Section \ref{sec:7} presents applications where $|\mathcal{D}| << n$ and the \eqref{eq:p-split} formulation is significantly smaller than the extended convex hull formulation. For disjunctions with general nonlinearities,  \eqref{eq:p-split} formulations may be simpler as they avoid the perspective function.
\end{remark}

\subsubsection{Illustrative example}
Before rigorously analyzing properties of P-split formulations, we give a simple illustrative example. 
To show the difference in relaxation strength between P-split formulations with different numbers of splits, consider the disjunctive feasible set
\begin{equation}
\label{eq:example1}
\tag{ex-1}
\begin{aligned}
    &\begin{bmatrix}
    \sum_{i=1}^4 x_i ^2  \vspace{0.1cm} \leq 1\\
    \boldsymbol{x} \in [-4,\ 4]^4
    \end{bmatrix}
    \
    \lor \
    \begin{bmatrix}
    \sum_{i=1}^4 -x_i \leq -12 \vspace{0.1cm} \\
    \boldsymbol{x} \in [-4,\ 4]^4
    
    \end{bmatrix}.   
\end{aligned}
\end{equation}
For the disjunction, we form 1-split, 2-split, and 4-split formulations, and plot their continuous relaxations. For the 2-split formulation, we use the variable partitioning $\{x_1, x_2\}$ and $\{x_3, x_4\}$. With 1- and 4-split formulations, the partitioning is unique. We use the tightest globally valid bounds for all the split variables (see Appendix~A).
Figure~\ref{fig:relaxations} shows the feasible sets of the continuous relaxations of the resulting $P$-split formulations projected onto $x_1, x_2$. 
Figure \ref{fig:relaxations_independent_ap} in Appendix~A plots the relaxations with weaker bounds and shows that the advantage over big-M remains. More details on different formulations are given in Appendix~A.

\begin{figure}[t]
    \centering
    \begin{subfigure}[t]{.32\linewidth}
    \includegraphics[trim={1.5cm 0cm 2cm 0cm},clip,width=.99\linewidth]{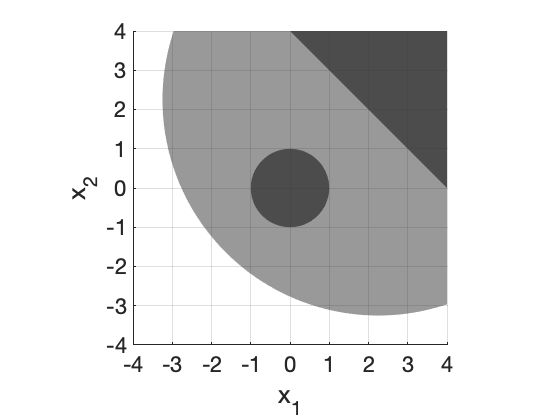}
    \caption{\centering 1-split/big-M $\left(\{x_1, x_2, x_3, x_4\}\right)$}
    \end{subfigure}
    \begin{subfigure}[t]{.32\linewidth}
    \includegraphics[trim={1.5cm 0cm 2cm 0cm},clip,width=.99\linewidth]{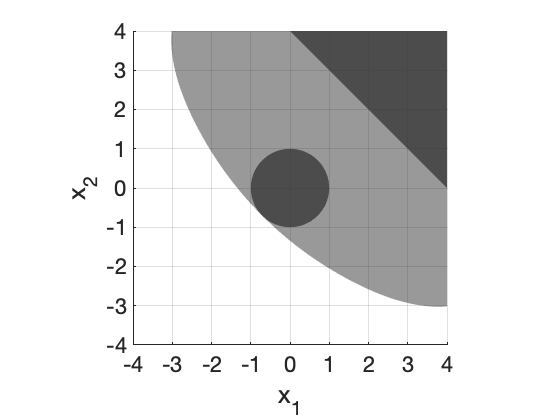}
    \caption{2-split \\ $\left(\{x_1, x_2\}, \{x_3, x_4\}\right)$}
    \end{subfigure}
    \begin{subfigure}[t]{.32\linewidth}
    \includegraphics[trim={1.5cm 0cm 2cm 0cm},clip,width=.99\linewidth]{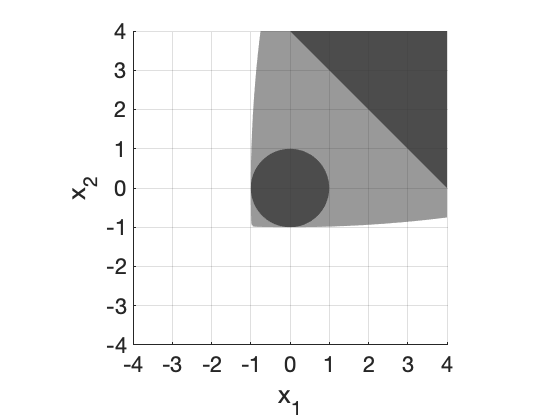}
    \caption{\centering 4-split $\left(\{x_1\},\{x_2\},\{x_3\},\{x_4\}\right)$}
    \end{subfigure}
      \caption{The dark regions show the feasible set of \eqref{eq:example1} in the $x_1,x_2$ space. The light grey areas show the continuously relaxed feasible set of the P-split formulations, and the sets in the parentheses show the partitioning of variables.}
    \label{fig:relaxations}
\end{figure}

 Increasing the number of splits for (\ref{eq:example1}) clearly leads to volumetrically tighter relaxations.  Since we used the tightest valid bounds, the 1-split is equivalent to the tightest big-M formulation with individual big-M coefficients.  Interestingly,  Figure~\ref{fig:relaxations} also shows that the 4-split is not strictly tighter than the 2-split everywhere. Next, we analyze the relaxation strengths of the different P-split formulations and show under which circumstances the formulation becomes strictly tighter.

\subsection{Properties of $P$-split formulations and their continuous relaxations}\label{sec:properties_one}

This section studies the continuous relaxations of $P$-split formulations, and how they compare to convex hull and big-M formulations. Theorem \ref{thm:bigM} shows that continuous relaxations of the $1$-split formulation and the big-M formulation have the same feasible set in the $\boldsymbol{x}$ space. 
Theorem \ref{thm:p_1} proves that introducing an additional split, \ie $(P+1)$-split, results in a formulation with an equally tight or tighter continuous relaxation than that of the $P$-split. Section~4 shows that, in some circumstances, we can construct the convex hull of a disjunction with a $P$-split formulation. This section only considers one constraint per disjunct; Section~4 addresses multiple constraints per disjunct.

\begin{theorem}\label{thm:bigM}
The 1-split formulation is equivalent to the big-M formulation in the sense that their continuous relaxations have the same set of feasible $x$ variables.
\end{theorem}
\begin{proof}
The first disaggregated variables $\nu^{\alpha^l}_1$ can directly be eliminated by substituting in  $\nu^{\alpha^l}_1 = \alpha^l - \underset{d \in \mathcal{D}_k \setminus 1}{\sum} \nu^{\alpha^l}_d$. We project out the remaining disaggregated variables $\nu^{\alpha^l}_d$ from the 1-split formulation using Fourier-Motzkin elimination. 
We eliminate trivially redundant constraints, \eg  $\ubar{\alpha}^l\lambda_d \leq \bar{\alpha}^l\lambda_d$, resulting in
\begin{equation}
\label{eq:1-split_bigM}
\begin{aligned}
&  \alpha^l  \leq b_l\lambda_l +  \underset{d \in \mathcal{D} \setminus l}{\sum}\bar{\alpha}^l\lambda_d  && \forall l \in \mathcal{D} \\
  &\sum_{i=1}^nh_{i,l}(x_i) \leq \alpha^l  &&\forall \ l \in \mathcal{D} \\
 & \underset{l \in \mathcal{D}}{\sum}\lambda_l = 1, \quad  \boldsymbol{\lambda} \in \{0, 1\}^{|\mathcal{D}|},\boldsymbol{x} \in \mathcal{X}, \boldsymbol{\alpha}^l \in \mathbb{R} \ &&\forall \ l \in \mathcal{D}.
\end{aligned}
\end{equation}
The auxiliary variables $ \alpha^l$ can be eliminated by combining the first and second constraints in \eqref{eq:1-split_bigM}. Note that the smallest valid big-M coefficients are $M^l = \bar{\alpha}^l - b_l $, and by introducing this notation we can write \eqref{eq:1-split_bigM} as
\begin{equation}
\begin{aligned}
&  \sum_{i=1}^nh_{i,l}(x_i)    \leq b_l  + M^l(1- \lambda_l)  && \forall l \in \mathcal{D}_k \\
 & \underset{l \in \mathcal{D}}{\sum}\lambda_l = 1, \quad  \boldsymbol{\lambda} \in \{0, 1\}^{|\mathcal{D}|},\ \boldsymbol{x} \in \mathcal{X}.
\end{aligned}
\end{equation}
Thus, projecting out the auxiliary and disaggregated variables in the $1$-split formulation leaves us with the big-M formulation. 
\end{proof}
By Theorem~1, the 1-split formulation is an alternative form of the classical big-M formulation. But the 1-split formulation introduces $|\mathcal{D}|^2 + |\mathcal{D}|$ new variables, so there is no advantage of the 1-split formulation compared to the big-M formulation.

To obtain a strict relation between different $P$-split relaxations, we assume that bounds on the auxiliary variables are \textit{additive}, as discussed in Definition 3.

\begin{definition}
We say that the upper and lower bounds for the functions $h_i$, originating from the constraint $\sum_{i=1}^n h_{i}(x_i) \leq 0$, are additive over $\mathcal{X}$ if 
\begin{equation}
\label{eq:independent}
\begin{aligned}
  &\min_{\boldsymbol{x} \in \mathcal{X}} \left(\underset{i \in \mathcal{I}_{s_1}
     }{\sum}h_{i}(x_i) +\underset{i \in \mathcal{I}_{s_2}
     }{\sum}h_{i}(x_i) \right) =  \min_{\boldsymbol{x} \in \mathcal{X}} \ \underset{i \in \mathcal{I}_{s_1}
     }{\sum}h_{i}(x_i) +\min_{\boldsymbol{x} \in \mathcal{X}} \ \underset{i \in \mathcal{I}_{s_2}
     }{\sum}h_{i}(x_i) \\
  &\max_{\boldsymbol{x} \in \mathcal{X}} \left(\underset{i \in \mathcal{I}_{s_1}
     }{\sum}h_{i}(x_i) +\underset{i \in \mathcal{I}_{s_2}
     }{\sum}h_{i}(x_i) \right) =   \max_{\boldsymbol{x} \in \mathcal{X}} \ \underset{i \in \mathcal{I}_{s_1}
     }{\sum}h_{i}(x_i) +\max_{\boldsymbol{x} \in \mathcal{X}} \ \underset{i \in \mathcal{I}_{s_2}
     }{\sum}h_{i}(x_i) ,
\end{aligned}
\end{equation}
hold for all valid split partitionings, \ie $ \forall \ \mathcal{I}_{s_1}, \mathcal{I}_{s_2} \subset \{1, \dots, n\}\ | \ \mathcal{I}_{s_1} \cap \mathcal{I}_{s_2} = \emptyset $.

\end{definition}
The simplest case with additive bounds is when $\mathcal{X}$ is an $n$-dimensional box, or when the bounds are calculated with interval arithmetic. 
Note we only use the additive bound property to derive a strict relation between different $P$-split formulations: it is not required by the $P$-split formulations. Tighter valid bounds result in an overall stronger continuous relaxation. 
Assuming that the bounds $\ubar{\alpha}^l_s$ and  $\bar{\alpha}^l_s$ are additive, Theorem \ref{thm:p_1} shows that increasing the number of splits $P$ results in a formulation with an equally tight or tighter continuous relaxation.

\begin{theorem}\label{thm:p_1}
For a disjunction with additive bounds, a $(P+1)$-split formulation, obtained by splitting one of the variable groups in a $P$-split formulation, is always as tight or tighter than the corresponding P-split formulation. 
\end{theorem}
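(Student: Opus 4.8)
The plan is to prove the theorem by a direct projection argument: show that every point feasible for the continuous relaxation of the $(P+1)$-split formulation yields, after an explicit construction of new disaggregated variables, a feasible point of the continuous relaxation of the $P$-split formulation with the same $\boldsymbol{x}$. Since the objective depends only on $\boldsymbol{x}$ (the disjunction is embedded in a larger problem and the $\alpha$, $\nu$, $\lambda$ are auxiliary), this containment of projected relaxations is exactly the statement that the $(P+1)$-split relaxation is at least as tight. Without loss of generality, assume the $(P+1)$-split is obtained from the $P$-split by splitting the last variable group $\mathcal{I}_P$ into $\mathcal{I}_P' \cup \mathcal{I}_P''$, so that for each disjunct $l$ the two new $\alpha$-variables $\alpha^l_{P'}, \alpha^l_{P''}$ replace the old $\alpha^l_P$, and correspondingly there are new disaggregated variables $\nu^{\alpha^l_{P'}}_d, \nu^{\alpha^l_{P''}}_d$ in place of $\nu^{\alpha^l_P}_d$.

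First I would take an arbitrary point feasible for the continuous relaxation of the $(P+1)$-split formulation and define the candidate $P$-split point by keeping $\boldsymbol{x}$, $\boldsymbol{\lambda}$, and all $\alpha$/$\nu$ indexed by $s \in \{1,\dots,P-1\}$ unchanged, and setting $\alpha^l_P := \alpha^l_{P'} + \alpha^l_{P''}$ and $\nu^{\alpha^l_P}_d := \nu^{\alpha^l_{P'}}_d + \nu^{\alpha^l_{P''}}_d$ for all $l, d$. Then I would verify each family of constraints of the $P$-split formulation in turn. The aggregation identity $\alpha^l_P = \sum_d \nu^{\alpha^l_P}_d$ follows by summing the two corresponding $(P+1)$-split identities; the constraint $\sum_{s=1}^P \nu^{\alpha^l_s}_l \le b_l \lambda_l$ follows because $\nu^{\alpha^l_{P'}}_l + \nu^{\alpha^l_{P''}}_l = \nu^{\alpha^l_P}_l$, so the sum is unchanged; the separable inequality $\sum_{i \in \mathcal{I}_P} h_{i,l}(x_i) \le \alpha^l_P$ follows by adding $\sum_{i \in \mathcal{I}_P'} h_{i,l}(x_i) \le \alpha^l_{P'}$ and $\sum_{i \in \mathcal{I}_P''} h_{i,l}(x_i) \le \alpha^l_{P''}$ and using $\mathcal{I}_P = \mathcal{I}_P' \,\dot\cup\, \mathcal{I}_P''$; and $\sum_l \lambda_l = 1$, $\boldsymbol{\lambda} \ge \boldsymbol{0}$, $\boldsymbol{x} \in \mathcal{X}$ are inherited verbatim.

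The step that carries the real content — and the main obstacle — is checking the bound constraints $\ubar{\alpha}^l_P \lambda_d \le \nu^{\alpha^l_P}_d \le \bar{\alpha}^l_P \lambda_d$ for the aggregated formulation. Adding the $(P+1)$-split bounds gives $(\ubar{\alpha}^l_{P'} + \ubar{\alpha}^l_{P''})\lambda_d \le \nu^{\alpha^l_P}_d \le (\bar{\alpha}^l_{P'} + \bar{\alpha}^l_{P''})\lambda_d$, so it suffices to know that $\ubar{\alpha}^l_{P'} + \ubar{\alpha}^l_{P''} \le \ubar{\alpha}^l_P$ and $\bar{\alpha}^l_{P'} + \bar{\alpha}^l_{P''} \ge \bar{\alpha}^l_P$ (using $\lambda_d \ge 0$). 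This is precisely where the independent-bounds hypothesis (Definition~6) is used: it gives $\ubar{\alpha}^l_P = \min_{\boldsymbol{x}\in\mathcal{X}} \sum_{i \in \mathcal{I}_P} h_{i,l}(x_i) = \min_{\boldsymbol{x}} \sum_{i \in \mathcal{I}_P'} h_{i,l}(x_i) + \min_{\boldsymbol{x}} \sum_{i \in \mathcal{I}_P''} h_{i,l}(x_i) = \ubar{\alpha}^l_{P'} + \ubar{\alpha}^l_{P''}$, and likewise with equality for the upper bounds — so in fact the bound constraints match exactly. (Without independence one only gets the inequalities $\ubar{\alpha}^l_{P'} + \ubar{\alpha}^l_{P''} \le \ubar{\alpha}^l_P$ and $\bar{\alpha}^l_{P'} + \bar{\alpha}^l_{P''} \ge \bar{\alpha}^l_P$ by subadditivity/superadditivity of min/max over a product-free domain, which is still enough for containment; the independence assumption makes the relationship clean and is what the theorem statement invokes.) Having verified all constraints, the constructed point is feasible for the $P$-split relaxation with the same $\boldsymbol{x}$, hence the projection of the $(P+1)$-split relaxation onto $\boldsymbol{x}$-space is contained in that of the $P$-split relaxation, which is the claimed tightening. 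I would close by remarking that iterating this one-split argument yields the full hierarchy and connects, via Theorem~\ref{thm:bigM}, the $1$-split end of the chain to the big-M formulation.
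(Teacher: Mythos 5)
Your main argument is sound and is essentially the paper's proof read in the opposite direction: the paper relaxes the $(P+1)$-split formulation by aggregating the constraints of the two new groups and then substitutes $\alpha^l_1 = \alpha^l_a + \alpha^l_b$ and $\nu^{\alpha^l_1}_d = \nu^{\alpha^l_a}_d + \nu^{\alpha^l_b}_d$ to land exactly on the $P$-split formulation, whereas you map each feasible point of the $(P+1)$-split relaxation to a feasible point of the $P$-split relaxation by the same substitution. These are equivalent, and you correctly isolate the bound constraints as the only place the independence hypothesis enters.

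However, the sentence beginning ``it suffices to know that'' has the inequalities reversed, and the parenthetical built on it is wrong. From the aggregated bounds $(\ubar{\alpha}^l_{P'} + \ubar{\alpha}^l_{P''})\lambda_d \le \nu^{\alpha^l_P}_d \le (\bar{\alpha}^l_{P'} + \bar{\alpha}^l_{P''})\lambda_d$, deducing the $P$-split bounds $\ubar{\alpha}^l_P\lambda_d \le \nu^{\alpha^l_P}_d \le \bar{\alpha}^l_P\lambda_d$ requires $\ubar{\alpha}^l_{P'} + \ubar{\alpha}^l_{P''} \ge \ubar{\alpha}^l_P$ and $\bar{\alpha}^l_{P'} + \bar{\alpha}^l_{P''} \le \bar{\alpha}^l_P$, which is the \emph{opposite} of what superadditivity of $\min$ and subadditivity of $\max$ deliver. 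Without independence one only gets $\ubar{\alpha}^l_{P'} + \ubar{\alpha}^l_{P''} \le \ubar{\alpha}^l_P$ and $\bar{\alpha}^l_{P'} + \bar{\alpha}^l_{P''} \ge \bar{\alpha}^l_P$, i.e., the aggregated bounds are \emph{weaker} than the $P$-split bounds, and a point with $\nu^{\alpha^l_{P'}}_d = \ubar{\alpha}^l_{P'}\lambda_d$ and $\nu^{\alpha^l_{P''}}_d = \ubar{\alpha}^l_{P''}\lambda_d$ can violate the $P$-split lower bound after aggregation. So independence, which forces these relations to hold with equality, is genuinely needed rather than cosmetic; the paper's example \eqref{eq:example1} with the tightest (non-independent) bounds is an explicit instance where the hierarchy fails. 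Since the theorem hypothesizes independent bounds, your proof of the stated result stands; only the claim that containment survives without independence should be deleted.
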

\begin{proof}
W.l.o.g., we assume that the $(P+1)$-split formulation is obtained by splitting the first group of variables $\mathcal{I}_1$ in the $P$-split formulation into subsets $\mathcal{I}_a$ and $\mathcal{I}_b$, \ie, $\mathcal{I}_a \cup \mathcal{I}_b = \mathcal{I}_1$ and $\mathcal{I}_a \cap \mathcal{I}_b = \emptyset$. The $(P+1)$-formulation can be written
\begin{align}
\label{eq:p+1,1}
& \alpha^l_s = \underset{d \in \mathcal{D}}{\sum} \nu^{\alpha^l_s}_d && \forall \ s \in \{a,b,2,3, \dots, P\}, \ \forall\ l \in \mathcal{D} \\
\label{eq:p+1,2}
& \nu^{\alpha^l_a}_l + \nu^{\alpha^l_b}_l+ \sum_{s=2}^P \nu^{\alpha^l_s}_l  \leq b_l\lambda_l &&\forall\ l \in \mathcal{D}\\
\label{eq:p+1,4}
& \ubar{\alpha}^l_s\lambda_d \leq \nu^{\alpha^l_s}_d \leq \bar{\alpha}^l_s\lambda_d  &&\forall \ s \in \{a,b,2,3, \dots, P\},\forall\ l,d \in \mathcal{D}\\
\label{eq:p+1,5}
&\underset{i \in \mathcal{I}_s}{\sum}h_{i,l}(x_i) \leq \alpha^l_s  &&\forall\ s \in \{a,b,2,3,\dots, P\}, \ \forall \ l \in \mathcal{D} \\
 \label{eq:p+1,7}
& \underset{l \in \mathcal{D}}{\sum}\lambda_l = 1, \quad   \boldsymbol{\lambda} \in \{0, 1\}^{|\mathcal{D}|}\\
 \label{eq:p+1,9}
&\boldsymbol{x} \in \mathcal{X}, \boldsymbol{\alpha}^l \in \mathbb{R}^{P+1},  \ \boldsymbol{\nu}^{\alpha^l_s} \in \mathbb{R}^{|\mathcal{D}|} &&\forall\ s \in \{a, b, 2, 3, \dots, P\}, \ \forall \ l \in \mathcal{D}.
\end{align}
Since the bounds are additive (Definition 3), we have $ \ubar{\alpha}^l_a + \ubar{\alpha}^l_b = \ubar{\alpha}^l_1$ and $\bar{\alpha}^l_a + \bar{\alpha}^l_b = \bar{\alpha}^l_1$. Aggregating the first two constraints in \eqref{eq:p+1,4}, \ie $s = \{a,b\}$, results in
\begin{align}
\label{eq:agg1}
& \ubar{\alpha}^l_{1}\lambda_d \leq \nu^{\alpha^l_{a}}_d + \nu^{\alpha^l_{b}}_d \leq \bar{\alpha}^l_{1}\lambda_d  &&\forall\ l,d \in \mathcal{D}.    
\end{align}
Replacing the two summed constraints with \eqref{eq:agg1} results in a relaxation.
Similarly, we can aggregate the first two constraints in \eqref{eq:p+1,1} and \eqref{eq:p+1,5}, respectively, into
\begin{align}
\label{eq:agg2}
    & \alpha^l_{a} +\alpha^l_{b} = \underset{d \in \mathcal{D}}{\sum} \left(\nu^{\alpha^l_{a}}_d +\nu^{\alpha^l_{b}}_d\right) && \forall\ l \in \mathcal{D} \\
\label{eq:agg3}
     &\underset{i \in \mathcal{I}_{a}}{\sum}h_{i,l}(x_i) +\underset{i \in \mathcal{I}_{b}}{\sum}h_{i,l}(x_i) \leq \alpha^l_{a} + \alpha^l_{b}  &&\forall \ l \in \mathcal{D},
\end{align}
which again relaxes the constraints. We can now relax the $(P+1)$-split formulation, by using the aggregated constraints in \eqref{eq:agg1}, \eqref{eq:agg2}, and \eqref{eq:agg3}. Furthermore, if we substitute $\alpha^l_{1} = \alpha^l_{a} +\alpha^l_{b} \ \forall \ l \in \mathcal{D}$ and $\nu^{\alpha^l_{1}}_d = \nu^{\alpha^l_{a}}_d + \nu^{\alpha^l_{b}}_d \ \forall \ l,d \in \mathcal{D}$, then we recover the constraints of the original $P$-split formulation corresponding to $\mathcal{I}_1$. 
Thus, by relaxing the $(P+1)$-formulation through constraint aggregation, we obtain the original $P$-split formulation.   
\end{proof}
The proof of Theorem 2 shows that the main difference between the $(P+1)$-split and $P$-split formulations is the disaggregation of the constraints in \eqref{eq:agg1}, \eqref{eq:agg2}, and \eqref{eq:agg3}. Corollary \ref{cor:tighter} shows that the $(P+1)$-split formulation can be strictly tighter.

\begin{corollary}\label{cor:tighter}
 A $(P+1)$-split formulation, obtained by splitting one of the variable groups in a $P$-split formulation can be strictly tighter.
\end{corollary}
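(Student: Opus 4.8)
The plan is to exhibit a concrete instance where the $P$-split relaxation is strictly looser than the $(P+1)$-split relaxation, thereby showing the inequality in Theorem~\ref{thm:p_1} cannot in general be replaced by equality. The simplest arena is a two-term disjunction ($|\mathcal{D}| = 2$) with a single constraint per disjunct, $\mathcal{X}$ a box (so bounds are automatically independent), and we compare the $1$-split (equivalently big-M, by Theorem~\ref{thm:bigM}) with the $2$-split. First I would pick small dimension, say $n = 2$ with $h_{1,l}, h_{2,l}$ linear or mildly convex, and choose the right-hand sides $b_l$ and the box so that the convex hull of the disjunction is strictly contained in the big-M relaxation --- this is the classical big-M-vs-convex-hull gap, which is well documented. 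Then I would compute the $2$-split relaxation explicitly (project out the $\nu$ and $\alpha$ variables via Fourier--Motzkin, exactly as in the proof of Theorem~\ref{thm:bigM}) and verify that some point feasible for the $1$-split relaxation is cut off by the $2$-split relaxation.

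The key steps, in order: (i) fix the instance and write down the three relaxations (big-M $=$ $1$-split, $2$-split, convex hull); (ii) identify a point $\bar{\boldsymbol{x}}$ (together with the relaxed binary/auxiliary values) that lies in the $1$-split relaxation but violates a valid inequality of the $2$-split relaxation --- the natural candidate is a point where one disjunct's constraint is ``half-satisfied'' on partition $\mathcal{I}_a$ but badly violated on $\mathcal{I}_b$, which the $2$-split detects through the disaggregated bounds \eqref{eq:p+1,5} but the aggregated bound \eqref{eq:agg1} (hence the $1$-split) does not; (iii) conclude strictness. An even cleaner route, given the machinery already in the paper, is to invoke Theorem~\ref{thm:p_1}'s proof structure: the $P$-split relaxation is obtained from the $(P+1)$-split by the specific aggregations \eqref{eq:agg1}--\eqref{eq:agg3}, so it suffices to show these aggregations lose information on one example, i.e. that there is a $(\bar{\nu}^{\alpha^l_a}_d + \bar{\nu}^{\alpha^l_b}_d)$ satisfying the aggregated bounds but not extendable to individual $\bar{\nu}^{\alpha^l_a}_d, \bar{\nu}^{\alpha^l_b}_d$ satisfying \eqref{eq:p+1,5} together with the split constraints \eqref{eq:p+1,7}.

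I expect the main obstacle to be bookkeeping rather than conceptual: one must choose the instance carefully enough that the gap survives projection onto the $\boldsymbol{x}$-space (the statement is about the $\boldsymbol{x}$-space feasible region of the relaxation, or at least about relaxation strength for the embedded problem), and one must present the witness point and the separating inequality cleanly. A convenient trick is to take the disjuncts to share structure --- e.g. $g_1(\boldsymbol{x}) = x_1 + x_2 \le 1$ and $g_2(\boldsymbol{x}) = -x_1 - x_2 \le -1$ type constraints, or the $P\_ball$-style constraints used later in the computational section --- on a box like $[0,1]^2$ split as $\mathcal{I}_1 = \{1\}$, $\mathcal{I}_2 = \{2\}$, since such instances are exactly where big-M is known to be weak and where the split into coordinates recovers strictly more. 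I would then simply reference that the computational results in Section~\ref{sec:numeric} already contain instances exhibiting this strict improvement, and give one minimal analytic example in the proof to make the corollary self-contained.
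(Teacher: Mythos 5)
Your proposal is correct in substance, and your ``even cleaner route'' is in fact exactly the paper's proof: the authors observe that any point satisfying the disaggregated constraints \eqref{eq:p+1,2}, \eqref{eq:p+1,5}, and \eqref{eq:p+1,7} satisfies the aggregated ones \eqref{eq:agg1}--\eqref{eq:agg3}, but not conversely (e.g.\ one can have $\sum_{i \in \mathcal{I}_{a}}h_{i,l}(x_i) +\sum_{i \in \mathcal{I}_{b}}h_{i,l}(x_i) \leq \alpha^l_{a} + \alpha^l_{b}$ while $\sum_{i \in \mathcal{I}_{a}}h_{i,l}(x_i) > \alpha^l_{a}$), and they defer the concrete illustration to Appendix~A. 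Your primary route --- fixing an explicit small instance and exhibiting a witness point in the $1$-split relaxation that is separated by the $2$-split relaxation after projection onto the $\boldsymbol{x}$-space --- is more laborious but also more airtight: the paper's one-line argument only establishes that the two \emph{lifted} constraint systems are not equivalent, and strictly speaking does not rule out that the gap closes under projection, which is precisely the subtlety you flag yourself. Since the corollary is an existential claim, either route suffices; what your explicit-example version buys is a self-contained verification that the strict containment survives in the variable space that matters, at the cost of the bookkeeping you anticipate. The one thing to be careful about if you carry out your plan is the paper's standing hypothesis of independent bounds (Definition~3): your example must be chosen so that the bounds are independent (a box domain suffices, as you note), otherwise Theorem~\ref{thm:p_1} does not guarantee the containment whose strictness you are demonstrating.
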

\begin{proof}
Any variable combination satisfying constraints \eqref{eq:p+1,2}, \eqref{eq:p+1,5}, and \eqref{eq:p+1,7} always satisfies \eqref{eq:agg1}, \eqref{eq:agg2}, and \eqref{eq:agg3}, but not the other way around. For example, we may have $\underset{i \in \mathcal{I}_{a}}{\sum}h_{i,l}(x_i) +\underset{i \in \mathcal{I}_{b}}{\sum}h_{i,l}(x_i) \leq \alpha^l_{a} + \alpha^l_{b}$, and $\underset{i \in \mathcal{I}_{a}}{\sum}h_{i,l}(x_i) > \alpha^l_{a}$. 
\end{proof}
For the illustrative example (\ref{eq:example1}), the tightest valid bounds for the $\alpha$-variables are not additive (see Appendix A). Figure~\ref{fig:relaxations} illustrates that due to the non-additive bounds, the relaxations do not form a hierarchy, \ie the  continuously relaxed 2-split (4-split) formulation is not strictly tighter than the $1$-split (2-split). However, for (\ref{eq:example1}), increasing the number of splits leads to volumetrically tighter relaxations.  
Appendix A continues on (\ref{eq:example1}) using additive bounds and illustrates that increasing $P$ results in a strictly tighter formulation.

Note that Theorem~\ref{thm:p_1} and Corallary~\ref{cor:tighter} build on the fact that a given P-split formulation is split one step further. Different variable partitionings for a given number of splits can result in weaker and stronger relaxations. Partitioning strategies are further discussed in Section~\ref{sec:6}.

\subsubsection{Special case: 2-term disjunctions}
So far, we have considered disjunctions with an arbitrary number of disjuncts. 
But \textit{2-term disjunctions}, or disjunctions with two disjuncts, are
of special interest. 
Indicator constraints, which either hold or are relaxed depending on a binary variable,
can be modeled as a 2-term disjunction \cite{belotti2016handling,bonami2015mathematical}. 
Indicator constraints arise in classification~\cite{brooks2011support} and chance constraints \cite{luedtke2010integer}. For a 2-term disjunction, Theorem~\ref{thm:theorem3} projects out the auxiliary variables to obtain a non-extended formulation.  

\begin{theorem}\label{thm:theorem3}
For a  two-term disjunction, the $P$-split formulation has the following non-extended realization
\begin{equation}
\begin{aligned}
\label{eq:P-2-split-nonlifted}
& \underset{j \in \mathcal{S}_p}{\sum}\left( \underset{i \in \mathcal{I}_j}{\sum}h_{i,1}(x_i) \right)\leq \left(b_1 - \underset{s \in \mathcal{S}\setminus \mathcal{S}_p}{\sum} \ubar{\alpha}^1_s\right)\lambda_1 + \underset{s \in \mathcal{S}_p}{\sum}\bar{\alpha}^1_s\lambda_2 && \forall \mathcal{S}_p \subset \mathcal{S}\\
& \underset{j \in \mathcal{S}_p}{\sum}\left( \underset{i \in \mathcal{I}_j}{\sum}h_{i,2}(x_i) \right)\leq \left(b_2 - \underset{s \in \mathcal{S}\setminus \mathcal{S}_p}{\sum} \ubar{\alpha}^2_s\right)\lambda_2 + \underset{s \in \mathcal{S}_p}{\sum}\bar{\alpha}^2_s\lambda_1 && \forall \mathcal{S}_p \subset \mathcal{S}\\
 & \lambda_1 + \lambda_2  = 1, \ \ \boldsymbol{\lambda} \in \{0, 1\}^2, \ \ \boldsymbol{x} \in \mathcal{X},
\end{aligned}
\end{equation}
where $\mathcal{S} = \{1, 2, \dots P\}$ and the sets $\mathcal{I}_j$ contain the indices for the variable partitionings for the $P$-split formulation \cite{kronqvist2021between}.
\end{theorem}
\begin{proof}
We prove the theorem by projecting out all the auxiliary variables from \eqref{eq:p-split}. First, we use the equality constraints for the disaggregated variables ($\alpha_s^l = \nu_1^{\alpha_s^l} + \nu_2^{\alpha_s^l}$) to eliminate the variables $\nu^{\alpha^l_s}_1$, resulting in
\begin{align}
& \sum_{s=1}^P\left( \alpha^1_s - \nu^{\alpha^1_s}_2\right)  \leq b_1\lambda_1 \label{eq:p-2-s-eq1}\\
& \sum_{s=1}^P \nu^{\alpha^2_s}_2  \leq b_2\lambda_2 \label{eq:p-2-s-eq2}\\
& \ubar{\alpha}^l_s\lambda_1 \leq \alpha^l_s - \nu^{\alpha^l_s}_2 \leq \bar{\alpha}^l_s\lambda_1  &&\forall s  \in \{1, 2 , \dots, P\},\forall\ l \in  \{1, 2\}\label{eq:p-2-s-eq3}\\
& \ubar{\alpha}^l_s\lambda_2 \leq \nu^{\alpha^l_s}_2 \leq \bar{\alpha}^l_s\lambda_2  &&\forall s  \in \{1, 2 , \dots, P\},\forall\ l \in  \{1, 2\}\label{eq:p-2-s-eq4}\\
 &\underset{i \in \mathcal{I}_s}{\sum}h_{i,l}(x_i) \leq \alpha^l_s  &&\forall\ s \in \{1, 2 , \dots, P\}, \ \forall \ l \in  \{1, 2\} \label{eq:p-2-s-eq5}\\
 & \lambda_1 + \lambda_2  = 1, \quad  \boldsymbol{\lambda} \in \{0, 1\}^2 \label{eq:p-2-s-eq6}\\
&\boldsymbol{x} \in \mathcal{X}, \boldsymbol{\alpha}^l \in \mathbb{R}^{P}, \boldsymbol{\nu}^{\alpha^l_s} \in \mathbb{R}^P\ &&\forall \ l \in \{1, 2\}, \forall\ s \in \{1, 2 , \dots, P\}.    
\end{align}
By Fourier-Motzkin elimination, we can also project out the $\nu^{\alpha^1_s}_2$ variables. Combining the constraints in \eqref{eq:p-2-s-eq3} and \eqref{eq:p-2-s-eq4} only results in trivially redundant constraints, \eg  $\alpha^l_s \leq \bar{\alpha}^l_s(\lambda_1 + \lambda_2)$. We eliminate the first variable $\nu^{\alpha^1_1}_2$ by combining \eqref{eq:p-2-s-eq1} with \eqref{eq:p-2-s-eq3}--\eqref{eq:p-2-s-eq4} which results in the two new constraints
\begin{align}
\tag{\ref{eq:p-2-s-eq1}a}
\label{eq:motzkin1}
& \sum_{s=2}^P\left( \alpha^1_s - \nu^{\alpha^1_s}_2\right)  \leq  b_1\lambda_1 - \ubar{\alpha}^1_1\lambda_1,\\
\tag{\ref{eq:p-2-s-eq1}b}
\label{eq:motzkin2}
& \sum_{s=2}^P\left( \alpha^1_s - \nu^{\alpha^1_s}_2\right) + \alpha^1_1  \leq  b_1\lambda_1 + \bar{\alpha}^1_1\lambda_2.
\end{align}
Eliminating the next variable is done by repeating the procedure of combining the new constraints \eqref{eq:motzkin1}--\eqref{eq:motzkin2} with the corresponding inequalities in \eqref{eq:p-2-s-eq3}--\eqref{eq:p-2-s-eq4}. Each elimination step doubles the number of constraints originating from inequality \eqref{eq:p-2-s-eq1}. Eliminating all the variables $\nu^{\alpha^1_s}_2$ results in the set of constraints  
\begin{equation}
\label{eq:motzkin3}
   \underset{s \in \mathcal{S}_p}\sum \alpha^1_s \leq \left(b_1 - \underset{s \in \mathcal{S}\setminus \mathcal{S}_p}{\sum} \ubar{\alpha}^1_s\right)\lambda_1 + \underset{s \in \mathcal{S}_p}{\sum}\bar{\alpha}^1_s\lambda_2 \quad \forall \mathcal{S}_p \subset \mathcal{S}.
\end{equation}
The $\alpha_s^1$ variables can be eliminated by combining \eqref{eq:p-2-s-eq5} and \eqref{eq:motzkin3}, leaving us with the first set of constraints in \eqref{eq:P-2-split-nonlifted}.  
The variables $\nu^{\alpha^2_s}_2$ and $\alpha^2_s$ are eliminated by same steps, resulting in the second set of constraints in \eqref{eq:P-2-split-nonlifted}. 
\end{proof}

Corollary \ref{cor:splitting} shows how Theorem~\ref{thm:theorem3} gives insight into relaxation strength. Corollary \ref{cor:splitting} is covered by Theorem~\ref{thm:p_1}, but the proof offers an alternative interpretation.

\begin{corollary}\label{cor:splitting}
For a two-term disjunction with additive bounds, the feasible set of the continuous relaxation of any $(P+1)$-split formulation, formed by splitting one variable group in a $P$-split formulation, is contained within the feasible set of continuous relaxation of the original $P$-split formulation. 
\end{corollary}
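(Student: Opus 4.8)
The plan is to argue directly from the non-extended realization of Theorem~3 rather than from the lifted formulation, which is exactly the "alternative interpretation" the surrounding text promises. Write $\mathcal{S} = \{1,\dots,P\}$ for the group indices of the $P$-split formulation and assume w.l.o.g. that group $\mathcal{I}_1$ is split into $\mathcal{I}_a,\mathcal{I}_b$ with $\mathcal{I}_a \cup \mathcal{I}_b = \mathcal{I}_1$ and $\mathcal{I}_a \cap \mathcal{I}_b = \emptyset$, so the $(P+1)$-split formulation has group index set $\mathcal{S}' = \{a,b,2,\dots,P\}$. Both continuously relaxed feasible sets live in the same $(\boldsymbol{x},\lambda_1,\lambda_2)$-space, since Theorem~3 has already projected out all auxiliary and disaggregated variables; hence it suffices to show that every defining inequality of the $P$-split realization \eqref{eq:P-2-split-nonlifted} is implied by the inequalities of the $(P+1)$-split realization.

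First I would record the two elementary facts that drive everything. Independence of the bounds makes them additive over disjoint groups, so $\ubar{\alpha}^l_a + \ubar{\alpha}^l_b = \ubar{\alpha}^l_1$ and $\bar{\alpha}^l_a + \bar{\alpha}^l_b = \bar{\alpha}^l_1$ for $l \in \{1,2\}$; this is the sole place the independence assumption is used. Separately, $\sum_{i\in\mathcal{I}_a} h_{i,l}(x_i) + \sum_{i\in\mathcal{I}_b} h_{i,l}(x_i) = \sum_{i\in\mathcal{I}_1} h_{i,l}(x_i)$ holds identically in $\boldsymbol{x}$ simply because $\mathcal{I}_1 = \mathcal{I}_a \cup \mathcal{I}_b$ is a partition.

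Next, fix an arbitrary $\mathcal{S}_p \subset \mathcal{S}$ and look at the first-disjunct inequality of \eqref{eq:P-2-split-nonlifted} indexed by $\mathcal{S}_p$. I would match it to a subset $\mathcal{S}_p' \subset \mathcal{S}'$ of the $(P+1)$-split realization: take $\mathcal{S}_p' = (\mathcal{S}_p\setminus\{1\})\cup\{a,b\}$ if $1 \in \mathcal{S}_p$, and $\mathcal{S}_p' = \mathcal{S}_p$ if $1\notin\mathcal{S}_p$. In both cases a one-line substitution using the two facts above shows the left-hand sides coincide — the $h_{i,1}$ terms over $\mathcal{I}_a$ and $\mathcal{I}_b$ recombine into the terms over $\mathcal{I}_1$ (on $\mathcal{S}_p$ when $1\in\mathcal{S}_p$, on the complement when $1\notin\mathcal{S}_p$) — and the right-hand sides coincide because $\sum_{s\in\mathcal{S}_p'}\bar{\alpha}^1_s = \sum_{s\in\mathcal{S}_p}\bar{\alpha}^1_s$ and $\sum_{s\in\mathcal{S}'\setminus\mathcal{S}_p'}\ubar{\alpha}^1_s = \sum_{s\in\mathcal{S}\setminus\mathcal{S}_p}\ubar{\alpha}^1_s$ by additivity of the bounds. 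So the $P$-split inequality for $\mathcal{S}_p$ is literally one of the $(P+1)$-split inequalities. The second-disjunct family is handled identically by the symmetry $\lambda_1\leftrightarrow\lambda_2$, $b_1\leftrightarrow b_2$, and the constraints $\lambda_1+\lambda_2=1$, $\boldsymbol{\lambda}\in\{0,1\}^2$ (relaxed to $[0,1]^2$) and $\boldsymbol{x}\in\mathcal{X}$ are common to both realizations. Consequently any $(\boldsymbol{x},\lambda_1,\lambda_2)$ feasible for the $(P+1)$-split relaxation satisfies every inequality defining the $P$-split relaxation, which is the asserted containment.

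The main obstacle I anticipate is purely combinatorial: making the index bookkeeping airtight, in particular checking that $\mathcal{S}'\setminus\mathcal{S}_p'$ reduces to $\mathcal{S}\setminus\mathcal{S}_p$ up to replacing the index $1$ by the pair $\{a,b\}$ in the two cases $1\in\mathcal{S}_p$ and $1\notin\mathcal{S}_p$, so that additivity of the bounds can be applied term by term. Once that is verified, no further machinery is needed beyond Theorem~3, and in fact the argument shows the stronger statement that the $P$-split inequalities form a subset of the $(P+1)$-split inequalities.
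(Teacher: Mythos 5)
Your proposal is correct and follows essentially the same route as the paper: the paper's proof also deduces containment from the fact that, via Theorem~3, every inequality of the non-extended $P$-split realization appears among those of the $(P+1)$-split realization, with the $(P+1)$-split adding further valid inequalities. You simply make explicit the index correspondence $\mathcal{S}_p \mapsto \mathcal{S}_p'$ and the use of bound additivity that the paper leaves implicit.
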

\begin{proof}
Theorem~\ref{thm:theorem3} shows that all the constraints in the  non-extended realization of a $P$-split formulation are contained in the $(P+1)$-split formulation, assuming additive bounds. The non-extended $(P+1)$-split formulation contains some additional valid inequalities that are not in the $P$-split formulation. 
\end{proof}
Theorem~\ref{thm:theorem3} realizes any $P$-split formulation for a two-term disjunction in the same variable space as the big-M formulation, and the constraints could be implemented as cutting planes. But the number of constraints in \eqref{eq:P-2-split-nonlifted} grows exponentially with the number of splits, so an efficient cutting plane implementation would require an efficient cut-selection technique. Such techniques are not considered in this paper.

\section{Disjuncts with multiple constraints}\label{sec:4}
This section studies the general case of Disjunction \eqref{eq:main_disjunction} with multiple constraints per disjunct. By Assumption~4, there are far fewer constraints than variables in each disjunct. Without this structure, the $P$-split formulations are inadvisable: the number of new auxiliary variables may exceed the number of variables in a convex hull formulation.
By treating each constraint of each disjunct individually, we can split each constraint by introducing a unique set of $\alpha$-variables. We thus need a triple index $\alpha_s^{l,k}$, where $k$ indexes the different constraints of the disjuncts. By the same procedure used for the single constraint case, we obtain
\allowdisplaybreaks{
\begin{align*}
& \alpha^{l,k}_s = \underset{d \in \mathcal{D}}{\sum} \nu^{\alpha^{l,k}_s}_d && \forall \ s \in \{1, \dots, P\}, \ \forall\ l \in \mathcal{D}, \ \forall k \in \mathcal{C}_l \\
& \sum_{s=1}^P \nu^{\alpha^{l,k}_s}_l  \leq b_{l,k}\lambda_l &&\forall\ l \in \mathcal{D}, \ \forall k \in \mathcal{C}_l\\
& \ubar{\alpha}^{l,k}_s\lambda_d \leq \nu^{\alpha^{l,k}_s}_d \leq \bar{\alpha}^{l,k}_s\lambda_d  &&\forall \ s \in \{1, \dots, P\},\forall\ l,d \in \mathcal{D}, \ \forall k \in \mathcal{C}_l\\
\label{eq:p-split-multicon}
\tag{$P$-split$^*$}
 &\underset{i \in \mathcal{I}_s}{\sum}h_{i,k}(x_i) \leq \alpha^{l,k}_s  &&\forall\ s \in \{1,\dots, P\}, \ \forall \ l \in \mathcal{D}, \ \forall k \in\mathcal{C}_l \\
 & \underset{l \in \mathcal{D}}{\sum}\lambda_l = 1, \quad   \boldsymbol{\lambda} \in \{0, 1\}^{|\mathcal{D}|}\\
&\boldsymbol{x} \in \mathcal{X}, \boldsymbol{\alpha}^{l,k} \in \mathbb{R}^{P},  \ \boldsymbol{\nu}^{\alpha^{l,k}_s} \in \mathbb{R}^{|\mathcal{D}|} &&\forall\ s \in \{1,\dots, P\}, \ \forall \ l \in \mathcal{D}, \ \forall k \in \mathcal{C}_l.
\end{align*}
}

Proposition~1, and the intermediate results leading up to Proposition~1, trivially extend to multiple constraints per disjunct. Therefore, formulation \eqref{eq:p-split-multicon} gives an exact representation of the original disjunction \eqref{eq:main_disjunction} for integer feasible solutions. Theorem \ref{thm:p_split_mult} formalizes two of the main properties of formulation \eqref{eq:p-split-multicon}

\begin{theorem}\label{thm:p_split_mult}
The 1-split formulation in \eqref{eq:p-split-multicon} is equivalent to the big-M formulation. Further splitting one group of variables in a $P$-split formulation, to form a $(P+1)$-split formulation, results in an as tight or tighter continuous relaxation if the bounds on the split variables are additive (see Definition 3). 
\end{theorem}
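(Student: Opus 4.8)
The plan is to reduce Theorem~\ref{thm:p_split_mult} to the single-constraint results (Theorems~\ref{thm:bigM} and~\ref{thm:p_1}) by observing that formulation~\eqref{eq:p-split-multicon} decomposes cleanly across the constraint index $k$. The only variables shared between the blocks indexed by different $k \in \mathcal{C}_l$ are $\boldsymbol{x}$ and $\boldsymbol{\lambda}$; all the auxiliary variables $\alpha^{l,k}_s$ and disaggregated variables $\nu^{\alpha^{l,k}_s}_d$ carry the index $k$ and appear in exactly one such block. So the argument structure mirrors the single-constraint proofs almost verbatim, just carrying an extra index along.

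For the first claim, I would repeat the elimination from the proof of Theorem~\ref{thm:bigM}, now applied independently to each $(l,k)$ pair. Setting $P=1$, for each $l \in \mathcal{D}$ and each $k \in \mathcal{C}_l$ substitute $\nu^{\alpha^{l,k}_1}_l = \alpha^{l,k} - \sum_{d \in \mathcal{D}\setminus l}\nu^{\alpha^{l,k}}_d$, project out the remaining $\nu^{\alpha^{l,k}}_d$ by Fourier--Motzkin, drop the trivially redundant inequalities, and eliminate $\alpha^{l,k}$ by combining it with the split constraint $\sum_{i}h_{i,k}(x_i)\le\alpha^{l,k}$. With the smallest valid coefficient $M^{l,k} = \bar{\alpha}^{l,k} - b_{l,k}$ this yields, for every $l$ and every $k\in\mathcal{C}_l$, the inequality $\sum_{i=1}^n h_{i,k}(x_i) \le b_{l,k} + M^{l,k}(1-\lambda_l)$, together with $\sum_l \lambda_l = 1$, $\boldsymbol{\lambda}\in\{0,1\}^{|\mathcal{D}|}$, $\boldsymbol{x}\in\mathcal{X}$ --- precisely the big-M formulation of a disjunction with multiple constraints per disjunct. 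Since the per-$(l,k)$ eliminations do not interact, nothing beyond bookkeeping is needed here.

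For the second claim, I would invoke the proof of Theorem~\ref{thm:p_1}. Suppose the $(P+1)$-split is obtained by splitting the group $\mathcal{I}_1$ into $\mathcal{I}_a \cup \mathcal{I}_b$. Because the bounds are independent, $\ubar{\alpha}^{l,k}_a + \ubar{\alpha}^{l,k}_b = \ubar{\alpha}^{l,k}_1$ and $\bar{\alpha}^{l,k}_a + \bar{\alpha}^{l,k}_b = \bar{\alpha}^{l,k}_1$ for every $l$ and every $k\in\mathcal{C}_l$. Then, exactly as in \eqref{eq:agg1}--\eqref{eq:agg3}, for each $(l,k)$ I aggregate the bound constraints on $\nu^{\alpha^{l,k}_a}_d,\nu^{\alpha^{l,k}_b}_d$, the defining equalities for $\alpha^{l,k}_a,\alpha^{l,k}_b$, and the two split constraints; substituting $\alpha^{l,k}_1 := \alpha^{l,k}_a + \alpha^{l,k}_b$ and $\nu^{\alpha^{l,k}_1}_d := \nu^{\alpha^{l,k}_a}_d + \nu^{\alpha^{l,k}_b}_d$ recovers exactly the $P$-split constraints for that $(l,k)$. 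Each such aggregation is a relaxation, so the continuous relaxation of the $(P+1)$-split is contained in that of the $P$-split, giving the claimed inequality of relaxation strength.

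I do not expect a genuine obstacle here --- the content is a routine lift of the single-constraint proofs. The one point that merits a sentence of care is the independence hypothesis: Definition~4 is stated for a single constraint $\sum_i h_i(x_i)\le 0$, so I would make explicit that we apply it separately to each constraint $g_k$ of each disjunct, i.e. the additivity identities $\ubar{\alpha}^{l,k}_a + \ubar{\alpha}^{l,k}_b = \ubar{\alpha}^{l,k}_1$ and likewise for the upper bounds hold for every $k\in\mathcal{C}_l$, which is exactly what the aggregation step requires. With that remark in place, the theorem follows by applying Theorems~\ref{thm:bigM} and~\ref{thm:p_1} blockwise over $k$.
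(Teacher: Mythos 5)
Your proposal is correct and follows essentially the same route as the paper's proof: the paper likewise observes that \eqref{eq:p-split-multicon} has the same structure as \eqref{eq:p-split} with an extra constraint index, performs the Fourier--Motzkin elimination of Theorem~\ref{thm:bigM} blockwise to recover the multi-constraint big-M formulation, and then applies the aggregation argument of Theorem~\ref{thm:p_1} to each $(l,k)$ block. Your added remark that the independence hypothesis must be read as holding separately for each constraint $g_k$ is a useful clarification the paper leaves implicit, but it does not change the argument.
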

\begin{proof}

Note \eqref{eq:p-split-multicon} has the same structure as \eqref{eq:p-split}, except that there are individual sets of $\alpha$-variables and constraints originating from the multiple constraints of the disjuncts. As in Theorem~1, we perform Fourier-Motzkin elimination to project out all of the auxiliary variables $\alpha_s^{l,k}$ and disaggregated variables $\nu_d^{\alpha_s^{l,k}}$, obtaining

\begin{equation}
\begin{aligned}
&  \sum_{i=1}^nh_{i,k}(x_i)    \leq b_{l,k}  + M^{l,k}(1- \lambda_l)  && \forall k\in\mathcal{C}_l, \ \forall l \in \mathcal{D}_k \\
 & \underset{l \in \mathcal{D}}{\sum}\lambda_l = 1, \quad  \boldsymbol{\lambda} \in \{0, 1\}^{|\mathcal{D}|},\ \boldsymbol{x} \in \mathcal{X}.
\end{aligned}
\end{equation}

As in Theorem~2, we relax the $(P+1)$-formulation by aggregating the constraints of two of the split variables to obtain the $P$-formulation.
\end{proof}

The main idea behind the $P$-split formulations is to obtain compact approximations of the convex hull with a stronger relaxation than big-M. However, it is interesting to understand the circumstances under which we can recover the true convex hull of the disjunction by a $P$-split formulation. Theorem \ref{thm:sufficient} gives sufficient conditions for a $P$-split formulation to form the convex hull.

\begin{theorem}\label{thm:sufficient}
If all $h_{i,k}$ are affine and $\mathcal{X}$ is a bounded box defined by upper and lower bounds on all variables, a continuous relaxation of the $n$-split formulation (all constraints are split for each variable) results in the convex hull of the disjunction.
\end{theorem}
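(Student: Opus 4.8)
The plan is to show that when every $h_{i,k}$ is affine, the $n$-split formulation \eqref{eq:p-split-multicon} with one variable per partition is exactly the extended (perspective/multiple-choice) convex hull formulation of the disjunction \eqref{eq:main_disjunction}, and hence its continuous relaxation projects onto the convex hull. First I would write out the $n$-split formulation explicitly in the affine case: since $\mathcal{I}_s = \{s\}$, each constraint contributes $h_{s,k}(x_s) \le \alpha_s^{l,k}$, and because $h_{s,k}$ is affine this can be taken as an equality, $h_{s,k}(x_s) = \alpha_s^{l,k}$, without loss of relaxation strength (as noted after Constraint~\eqref{eq:alpha_vars}). So each $\alpha_s^{l,k}$ is just an affine function of the single variable $x_s$.

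Next I would introduce the disaggregated copies of $\boldsymbol{x}$: define $x_i^d := \sum_{\text{over the relevant index}} \nu$-combination so that $\nu_d^{\alpha_s^{l,k}} = h_{s,k}$ evaluated at the $d$-th copy of $x_s$, scaled by $\lambda_d$. Concretely, since $h$ is affine, $h_{s,k}(x_s)\lambda_d$ is affine in $(x_s, \lambda_d)$, and the box-type bound constraints $\ubar{\alpha}_s^{l,k}\lambda_d \le \nu_d^{\alpha_s^{l,k}} \le \bar{\alpha}_s^{l,k}\lambda_d$ are precisely the statement that $\nu_d^{\alpha_s^{l,k}}$ equals $h_{s,k}$ applied to a disaggregated variable $x_s^d$ that lies in the appropriate scaled box $\lambda_d \cdot [\text{bounds on }x_s]$. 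The key algebraic step is: the aggregation constraint $\alpha_s^{l,k} = \sum_d \nu_d^{\alpha_s^{l,k}}$ together with $h_{s,k}$ affine forces $x_s = \sum_d x_s^d$, i.e. the standard variable-disaggregation identity. Then the constraint $\sum_s \nu_s^{\alpha^{l,k}_l}\lambda$-row becomes $g_k(\boldsymbol{x}^l) \le b_{l,k}\lambda_l$ for $k \in \mathcal{C}_l$, which is exactly the perspective version of disjunct $l$'s constraints acting on its own copy $\boldsymbol{x}^l$. After this substitution the system is $\boldsymbol{x} = \sum_l \boldsymbol{x}^l$, $g_k(\boldsymbol{x}^l) \le b_{l,k}\lambda_l$ for all $k \in \mathcal{C}_l$, $\boldsymbol{x}^l \in \lambda_l \mathcal{X}$ (the latter coming from the aggregated box bounds plus the $\boldsymbol{x}\in\mathcal{X}$ restriction — here I'd need to argue the box bounds on the $\alpha$'s faithfully encode $\boldsymbol{x}^l \in \lambda_l\mathcal{X}$, which holds because $\mathcal{X}$ is handled componentwise when all splits are singletons and bounds are taken tightest), $\sum_l \lambda_l = 1$, $\lambda \ge 0$. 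That is Balas's extended convex hull formulation of the disjunction, whose projection onto $\boldsymbol{x}$ is $\mathrm{conv}$ of the disjunction.

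Finally I would invoke the classical result (Balas \cite{balas1998disjunctive}, also \cite{ceria1999convex}) that the projection of this extended formulation's LP relaxation onto $\boldsymbol{x}$-space equals the closed convex hull of the union of the disjuncts intersected with $\mathcal{X}$, and conclude.

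The main obstacle I anticipate is the careful bookkeeping of the bound constraints on the $\nu$ and $\alpha$ variables: I must verify that, with singleton partitions and tightest valid bounds $\ubar{\alpha}$, $\bar{\alpha}$, the inequalities $\ubar{\alpha}_s^{l,k}\lambda_d \le \nu_d^{\alpha_s^{l,k}} \le \bar{\alpha}_s^{l,k}\lambda_d$ together with $\boldsymbol{x}\in\mathcal{X}$ exactly reproduce $\boldsymbol{x}^d \in \lambda_d\mathcal{X}$ for each $d$, rather than a weaker relaxation of it — this is where the affineness of $h_{i,k}$ and the componentwise structure of the split are essential, and where a subtle gap could hide if $\mathcal{X}$ is not a box. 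I would handle this by noting that for affine $h$ the bound $\bar\alpha_s^{l,k} = \max_{\boldsymbol{x}\in\mathcal{X}} h_{s,k}(x_s)$ depends only on $x_s$, so the per-coordinate constraints recovered are the tightest ones consistent with $\mathcal{X}$, and that any additional valid inequalities of $\mathcal{X}$ are already imposed via the global $\boldsymbol{x}\in\mathcal{X}$ and the disaggregation identity; if a fully rigorous equivalence to $\lambda_d\mathcal{X}$ fails for general convex $\mathcal{X}$, the statement should be read (and is presumably intended) for $\mathcal{X}$ a box, matching the "simplest case" remark after Definition~4.
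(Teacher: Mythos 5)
Your overall strategy---identify the affine $n$-split formulation with Balas's extended (multiple-choice) formulation and then invoke the classical projection result---is close in spirit to the paper's proof, which observes that the $n$-split formulation is the extended convex hull formulation of the disjunction rewritten in the variables $\boldsymbol{\alpha} = \mathbf{\Gamma}\boldsymbol{x}$ with $\mathbf{\Gamma}$ invertible, so that convex hulls are preserved under the bijection. But your execution has a genuine gap at the central step: the definition of the disaggregated copies $x_s^d$. You keep the triple-indexed variables $\alpha_s^{l,k}$, so for a fixed coordinate $s$ and a fixed disjunct index $d$ there are as many independent families $\nu_d^{\alpha_s^{l,k}}$ as there are pairs $(l,k)$ whose constraint involves $x_s$. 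Each such family yields a different candidate $x_s^d := \nu_d^{\alpha_s^{l,k}}/a$ (for the appropriate affine coefficient $a$), and nothing in \eqref{eq:p-split-multicon} forces these candidates to agree: the only couplings are $\sum_d \nu_d^{\alpha_s^{l,k}} = \alpha_s^{l,k}$ (a sum over $d$ for each fixed $(l,k)$, not a link across different $(l,k)$) and the box bounds. Consequently the substitution turning $\sum_s \nu_l^{\alpha_s^{l,k}} \le b_{l,k}\lambda_l$ into $g_k(\boldsymbol{x}^l)\le b_{l,k}\lambda_l$ for a \emph{single} copy $\boldsymbol{x}^l$ is not licensed; the $k$-th and $k'$-th constraints of disjunct $l$ act on copies that need not coincide, and the identity $\sum_d x_s^d = x_s$ is only guaranteed family-by-family. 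This is exactly the weakening that the paper's Section~4.1 (linking constraints, Figure~2) is designed to repair, so the relaxation you describe can be strictly larger than Balas's.

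The missing ingredient is Remark~1, the minimal-$\alpha$ convention, which the paper invokes explicitly: since every $h_{s,k}(x_s)$ is affine in the single variable $x_s$, all of them are scalar multiples of one another (constants absorbed into $b$), so the $n$-split formulation introduces only $n$ auxiliary variables $\alpha_s = \gamma_s x_s$, each shared by every constraint of every disjunct, and hence only one family of disaggregated variables $\nu_d^{\alpha_s}$ per coordinate. With that, your substitution $x_s^d := \nu_d^{\alpha_s}/\gamma_s$ is well defined and your argument closes (and then essentially coincides with the paper's: $\mathbf{\Gamma}$ is diagonal up to permutation with nonzero diagonal, the $n$-split formulation is the extended convex hull formulation of the reparametrized disjunction, and the invertible linear map carries one convex hull onto the other). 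Your closing worry about $\mathcal{X}$ not being a box is secondary by comparison; the essential hypothesis you must add is the shared-$\alpha$ convention, without which the proof does not go through.
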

\begin{proof}
Here, the original disjunction can be written as
\begin{equation}
\label{eq:lin_disjunct}
    \begin{matrix}
\begin{aligned}
    &\underset{l \in \mathcal{D}}{\lor} \begin{bmatrix}  
    \mathbf{A}^l \boldsymbol{x} \leq \boldsymbol{b}_l\\
    \end{bmatrix},
    \end{aligned}
\end{matrix}
\end{equation}
where each disjunct is bounded. W.l.o.g., no column in all matrices $\mathbf{A}^l$ comprises all zeros. As we only have linear constraints in the disjuncts, we never need to introduce more than $n$ auxiliary variables, see Remark 1. Furthermore, the mapping between the $\boldsymbol{\alpha}$ and $\boldsymbol{x}$ variables is given by a linear transformation. The $n$-split representation (\ie $P=n$) can then be written as
\begin{equation}
\label{eq:p-split-rep-lin}
    \begin{matrix}
\begin{aligned}
    &\underset{l \in \mathcal{D}}{\lor} \begin{bmatrix}  
    \mathbf{H}^l\boldsymbol{\alpha} \leq \boldsymbol{b}_l\\
    \end{bmatrix}\\ 
    &\boldsymbol{\alpha} = \mathbf{\Gamma} \boldsymbol{x}, {\boldsymbol{\alpha}} \in \mathbb{R}^{n},
    \end{aligned}
\end{matrix}
\end{equation}
where $\mathbf{H}^l\boldsymbol{\alpha} \leq \boldsymbol{b}_l$ includes the bound constraints $\ubar{\boldsymbol{\alpha}}\leq \boldsymbol{\alpha} \leq \bar{\boldsymbol{\alpha}}$. 
In \eqref{eq:p-split-rep-lin} the constraint $\boldsymbol{x} \in \mathcal{X}$ is redundant as the bound constraints on the $\boldsymbol{x}$ variables are perfectly captured by the bound constraints on the $\boldsymbol{\alpha}$ variables through $\boldsymbol{\alpha} = \mathbf{\Gamma} \boldsymbol{x}$.
The $n$-split formulation is the extended convex hull formulation of \eqref{eq:p-split-rep-lin}. The matrix $\mathbf{\Gamma}$ is invertible, as it can be diagonalized by applying a permutation (reordering the $\boldsymbol{\alpha}$-variables).  
Therefore, we have a bijective mapping between the $\boldsymbol{x}$ and ${\boldsymbol{\alpha}}$ variable spaces (only true for a full $n$-split). The linear transformations preserve an exact representation of the feasible sets, \ie
\begin{equation}
\begin{aligned}
  &  \mathbf{H}^l{\boldsymbol{\alpha}} \leq {\boldsymbol{b}_l} \iff \mathbf{A}^l \mathbf{\Gamma}^{-1}{\boldsymbol{\alpha}} \leq {\boldsymbol{b}_l},  \quad \quad \mathbf{A}^l\boldsymbol{x} \leq {\boldsymbol{b}_l}  \iff \mathbf{H}^l\mathbf{\Gamma} \boldsymbol{x} \leq \boldsymbol{b}_l.    
\end{aligned}
\end{equation}
For any point $\boldsymbol{z}$ in the convex hull of  \eqref{eq:p-split-rep-lin} $\exists\ \tilde{\boldsymbol{\alpha}}^1, \tilde{\boldsymbol{\alpha}}^2, \dots \tilde{\boldsymbol{\alpha}}^{\mathcal{|D}|}$ and $\boldsymbol{\lambda} \in \mathbb{R}_+^{|\mathcal{D}|}:$
\begin{align}
\label{eq:convex-comb}
    &\boldsymbol{z} = \sum_{l=1}^{|\mathcal{D}|}\lambda_l\tilde{\boldsymbol{\alpha}}^l, \quad \sum_{l=1}^{|\mathcal{D}|}\lambda_l = 1,\ \  \mathbf{H}^l\tilde{\boldsymbol{\alpha}}^l \leq {\boldsymbol{b}_l} \quad  \forall\ l \in \mathcal{D}.
\end{align}
Applying the reverse mapping $\mathbf{\Gamma}^{-1}$ to the convex combination of points in \eqref{eq:convex-comb} gives
\begin{equation}
     \mathbf{\Gamma^{-1}}\boldsymbol{z} = \sum_{l=1}^{|\mathcal{D}|}\lambda_l\mathbf{\Gamma^{-1}}\tilde{\boldsymbol{\alpha}}^l.
\end{equation}
By construction, $ \mathbf{A}^l\mathbf{\Gamma^{-1}}\tilde{\boldsymbol{\alpha}}^l \leq \boldsymbol{b}_l \quad  \forall l \in \mathcal{D}$.  The point $\mathbf{\Gamma^{-1}}\boldsymbol{z}$ is a convex combination of points that all satisfy the constraints of one of the disjuncts in \eqref{eq:lin_disjunct}, \ie it belongs to the convex hull of \eqref{eq:lin_disjunct}. The same technique shows that any point in the convex hull of disjunction~\eqref{eq:lin_disjunct} also belongs to that of disjunction~\eqref{eq:p-split-rep-lin}.
\end{proof}

For a disjunction with nonlinear constraints and $P$-split formulations with $P<n$, the proof breaks, as the mapping between the $\boldsymbol{x}$ and $\boldsymbol{\alpha}$ spaces is not bijective. The convex hull of a nonlinear disjunction can, therefore, typically not be obtained through a $P$-split formulation. However, as illustrated by example \eqref{eq:example1}  $P$-split formulations can form good approximations of the convex hull. 
\begin{remark}
Theorem 5 is merely of theoretical interest. To construct the convex hull of a linear disjunction, the extended convex hull formulation \cite{balas1998disjunctive} should be directly applied. In this case, there is no advantage of the $P$-split formulation.
\end{remark}

\begin{corollary}
For disjunctions where the constraint functions have additive bounds and $\mathcal{X}$ is a box defined by bounds on all variables, the continuous relaxations of P-split formulations form a hierarchy. The relaxations start from the big-M relaxation ($P=1$) with each additional split resulting in an as tight or tighter relaxation, and with only linear constraints it converges to the convex hull relaxation ($P=n$). 
\end{corollary}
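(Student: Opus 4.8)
The corollary is a synthesis of three results already in hand: (i) the base case $P=1$ reduces to the classical big-M formulation, (ii) refining the variable partition tightens the continuous relaxation under the independent-bounds hypothesis, and (iii) in the affine case the fully refined $n$-split recovers the convex hull. Concretely, the plan is to fix a nested sequence of variable partitions $\mathcal{P}_1 \sqsupseteq \mathcal{P}_2 \sqsupseteq \cdots \sqsupseteq \mathcal{P}_{n}$, where $\mathcal{P}_1$ is the trivial partition $\{1,\dots,n\}$, $\mathcal{P}_n$ is the partition into singletons, and each $\mathcal{P}_{P+1}$ is obtained from $\mathcal{P}_{P}$ by splitting exactly one group; then establish the chain of inclusions of the continuously relaxed feasible sets (all viewed in the common $\boldsymbol{x}$-space) step by step along this sequence.

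For the base case I would cite Theorem~\ref{thm:p_split_mult} (the multi-constraint version of Theorem~\ref{thm:bigM}): the 1-split formulation is equivalent to the big-M formulation. The Fourier-Motzkin projection used in that proof only eliminates the auxiliary and disaggregated variables and leaves $\boldsymbol{\lambda}$ untouched, so it applies verbatim to the continuous relaxation; hence the relaxed 1-split feasible set, projected onto $\boldsymbol{x}$, equals the relaxed big-M feasible set. For the inductive step I would invoke the second statement of Theorem~\ref{thm:p_split_mult}: assuming independent bounds, splitting one group of a $P$-split formulation yields a $(P+1)$-split formulation with an as-tight-or-tighter continuous relaxation. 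The underlying argument (in the proof of Theorem~\ref{thm:p_1}/\ref{thm:p_split_mult}) aggregates the new disaggregation constraints \eqref{eq:agg1}--\eqref{eq:agg3} and substitutes them back, exhibiting the $P$-split constraint system as a relaxation of the $(P+1)$-split system inside a common lifted space; projecting onto $\boldsymbol{x}$ preserves this inclusion. Chaining these inclusions over $\mathcal{P}_1 \sqsupseteq \cdots \sqsupseteq \mathcal{P}_n$ gives the nested family, i.e.\ the hierarchy, with strictness of some inclusions supplied by Corollary~\ref{cor:tighter}. Finally, if every $h_{i,k}$ is affine, Theorem~\ref{thm:sufficient} identifies the continuous relaxation of the $n$-split formulation (projected onto $\boldsymbol{x}$) with the convex hull of the disjunction, closing the chain at $P=n$.

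The points needing care, rather than a genuine obstacle since the heavy lifting is carried by the cited theorems, are threefold. First, ``tighter'' must be read in the projected $\boldsymbol{x}$-space (or after padding with a common set of auxiliary variables), because a $P$-split and a $(P+1)$-split formulation live in different lifted spaces; the aggregation-and-substitution step in the proof of Theorem~\ref{thm:p_split_mult} is precisely what makes the comparison well defined. Second, the independent-bounds assumption is indispensable: it is what justifies the identities $\ubar{\alpha}^{l,k}_a + \ubar{\alpha}^{l,k}_b = \ubar{\alpha}^{l,k}_1$ and $\bar{\alpha}^{l,k}_a + \bar{\alpha}^{l,k}_b = \bar{\alpha}^{l,k}_1$ used in the aggregation, and without it the relaxations need not be nested (compare the discussion of \eqref{eq:example1}). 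Third, one should note that the singleton partition is reachable from the trivial one by finitely many single-group splits, and that the choice of splitting order is immaterial, so the step-by-step induction indeed covers the whole range $P = 1, \dots, n$.
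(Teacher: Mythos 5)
Your proposal is correct and follows essentially the same route as the paper's own proof, which simply chains Theorem~\ref{thm:p_split_mult} (for the base case $P=1$ equalling big-M and for the tightening under each additional split with independent bounds) with Theorem~\ref{thm:sufficient} (for the convex hull at $P=n$ in the affine case). Your additional care about the nested partition sequence, the projection onto the $\boldsymbol{x}$-space, and the role of the independent-bounds hypothesis makes explicit what the paper leaves implicit, but introduces no new argument.
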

\begin{proof}
By Theorem~4, $1$-split and big-M have the same continuous relaxation in the $\boldsymbol{x}$-space. Each additional split results in an as tight or tighter relaxation. The equivalence to the convex hull is given by Theorem~5.
\end{proof}

The natural follow-up question to Theorem~5 is whether a $P$-split formulation can form the convex hull of a disjunction with convex constraints if $\mathcal{X}$ is a box. Theorem~6 shows that, in general, that is not the case. To simplify the theorem, we first introduce a simple definition of a disjunctive variable property.  

\begin{definition}
We say that the disjunctive constraint \eqref{eq:main_disjunction} is \textit{fully disjoint} if the intersection of any two disjuncts of the constraint is empty.
\end{definition}
\begin{theorem}
For a disjunctive constraint that is fully disjoint, with additive bounds and constraint functions that are strictly convex, a $P$-split formulation cannot form the true convex hull of the disjunctive constraint.
\end{theorem}
\begin{proof}
It is sufficient only to consider the $n$-split formulation, as the additive-bound property ensures the $n$-split has the strongest relaxation by Theorem 4. First, we pick two points $\boldsymbol{x}^1$ and $\boldsymbol{x}^2$ from two different disjuncts such that they are both extreme points of the disjuncts and that the entire line segment connecting $\boldsymbol{x}^1$ and $\boldsymbol{x}^2$ lies on the boundary of the convex hull of the disjunctive constraint. We will refer to the disjuncts as \say{1} and \say{2}.  We map these points onto the $P$-split representation of the disjunctive constraint (with $P=n$) by selecting
\begin{equation*}
\begin{aligned}
    &\alpha^{1,k}_i = h_{i,k}(x_i^1)   &&\forall\ i \in \{1,\dots, n\},  \ \forall k \in\mathcal{C}_1, \mathcal{C}_2\\
    &\alpha^{2,k}_i = h_{i,k}(x_i^2)   &&\forall\ i \in \{1,\dots, n\},  \ \forall k \in\mathcal{C}_1, \mathcal{C}_2,\\
\end{aligned}
\end{equation*}
and we aggregate these variables into the vectors $\boldsymbol{\alpha}^1$ and  $\boldsymbol{\alpha}^2$. By construction the points $(\boldsymbol{x}^1,\boldsymbol{\alpha}^1)$ and $(\boldsymbol{x}^2,\boldsymbol{\alpha}^2)$ are extreme points of the $n$-split representation of the disjunctive constraint. Now, consider the point in the middle of the line segment connecting the two points given by $(\boldsymbol{x}^3,\boldsymbol{\alpha}^3) = \left(0.5(\boldsymbol{x}^1 +\boldsymbol{x}^2), 0.5(\boldsymbol{\alpha}^1 + \boldsymbol{\alpha}^2)\right).$ As the functions $h_{i,k}$ are strictly convex, we get
\begin{equation*}
\begin{aligned}
   h_{i,k}(x_i^3) <  \alpha^{3,k}_i     &&\forall\ i \in \{1,\dots, n\},  \ \forall k \in\mathcal{C}_1, \mathcal{C}_2.
\end{aligned}
\end{equation*}
which follows directly from Jensen's inequality. Thus, $\exists\delta>0$ and a ball $\mathcal{B}_\delta =\{\boldsymbol{x} \in \R^n: \norm{\boldsymbol{x} - \boldsymbol{x}^3} \leq \delta \}$ such that 
\begin{equation*}
\begin{aligned}
   h_{i,k}(x_i) \leq  \alpha^{3,k}_i     &&\forall x_i \in \mathcal{B}_\delta, \  \forall\ i \in \{1,\dots, n\},  \ \forall k \in\mathcal{C}_1, \mathcal{C}_2.
\end{aligned}
\end{equation*}
Note that the point $\boldsymbol{\alpha}^3$ is within the feasible set of the $P$-split formulation, as it is a convex combination of two points that are both feasible for one disjunct each in the $P$-split representation. By construction, $\boldsymbol{x}^3$ lies on a facet of the convex hull of the disjunctive constraint. However, there is a small neighborhood around $\boldsymbol{x}^3$ defined by $\mathcal{B}_\delta$ that will be feasible for the $P$-split formulation. 
\end{proof}

From Theorem~6, we cannot expect a $P$-split formulation to exactly form the convex hull of disjunctive constraints with convex nonlinear functions, and this is not the goal of the $P$-split framework. Instead, the $P$-split framework offers a set of alternative formulations with a stronger continuous relaxation than big-M and computationally simpler formulations than the convex hull formulation. 

\subsection{Strengthening the relaxation by linking constraints}
Unless at least one $\alpha$ variable can be used in multiple constraints in the disjunct, there are no direct connections between the constraints within the disjuncts. 
When relations among $\alpha$ variables do exist, omitting connections between $\alpha$-variables of different constraints can result in a formulation with a weaker continuous relaxation (note the formulations are still correct for any integer feasible solution). 
Similar observations are made in the reformulation-linearization technique literature \cite{misener2014framework,sherali2013reformulation}.
With fewer splits, \ie the case desirable for practical implementations, it becomes more unlikely that the same $\alpha$ variable can be used in multiple constraints. 
We ask: how can $P$-split formulations be strengthened by introducing direct connections between constraints of the disjuncts?

For disjunctions with multiple constraints per disjunct, it can be possible to strengthen the $P$-split formulations with \textit{linking constraints}.
Linking constraints are derived from the upper and lower bounds obtained by combining two (or more) of the $\alpha$-variables from different constraints within the same disjunct  
\begin{equation}
\label{eq:linking}
    LB_\mathcal{X}(\rho_1 \alpha_s^{l,k} + \rho_2 \alpha_s^{l,j}) \leq \rho_1 \alpha_s^{l,k} + \rho_2 \alpha_s^{l,j} \leq UB_\mathcal{X}(\rho_1 \alpha_s^{l,k} + \rho_2 \alpha_s^{l,j}), 
\end{equation}
with $\rho_1, \rho_2 \in \{-1, 1\}$, $k,j \in \mathcal{C}_l$, and $LB _\mathcal{X}()$ and $UB_\mathcal{X}()$) are, respectively, the lower and upper bounds of the expressions over $\mathcal{X}$. 
Note the similarity to Belotti (2013) \cite{belotti2013bound}.
To limit the number of possible linking constraints, we only consider weighting factors $\rho_1, \rho_2 = \pm 1$. Linking constraints \eqref{eq:linking} can be included in the disjuncts, and they appear in the $P$-split formulation as 
\begin{equation}
    \begin{aligned}
    & \rho_1 \nu_d^{\alpha_s^{l,k}} + \rho_2 \nu_d^{\alpha_s^{l,j}} \leq UB_\mathcal{X}(\rho_1 \alpha_s^{l,k} + \rho_2 \alpha_s^{l,j})\lambda_d,\\
    & \rho_1 \nu_d^{\alpha_s^{l,k}} + \rho_2 \nu_d^{\alpha_s^{l,j}} \geq LB_\mathcal{X}(\rho_1 \alpha_s^{l,k} + \rho_2 \alpha_s^{l,j})\lambda_d.
    \end{aligned}
\end{equation}
The number of possible linking constraints grows rapidly with the number of constraints per disjunct. Therefore, it is important to identify which combinations of $\alpha$ variables produce redundant linking constraints and which ones can be useful. Proposition \ref{prop:linking} provides a criterion for identifying redundant combinations.
\begin{proposition}\label{prop:linking}
Any combination of auxiliary variables $\alpha_s^{l,k}$ and $\alpha_s^{l,j}$ such that
 \begin{equation}
\label{eq:redundant_test}
      \begin{aligned}
     &LB_\mathcal{X}(\rho_1 \alpha_s^{l,k} + \rho_2 \alpha_s^{l,j}) =  LB_\mathcal{X}(\rho_1 \alpha_s^{l,k}) +  LB_\mathcal{X}( \rho_2 \alpha_s^{l,j}),\\
     &UB_\mathcal{X}(\rho_1 \alpha_s^{l,k} + \rho_2 \alpha_s^{l,j}) =  UB_\mathcal{X}(\rho_1 \alpha_s^{l,k}) +  UB_\mathcal{X}( \rho_2 \alpha_s^{l,j}),
 \end{aligned}
 \end{equation}
results in a redundant linking constraint.
\end{proposition}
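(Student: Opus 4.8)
The plan is to show that the proposed linking constraint is implied by constraints already present in \eqref{eq:p-split-multicon}, so that adding it leaves the feasible set unchanged, i.e. it is \emph{redundant}. The key observation is that, for a single sign $\rho \in \{-1,1\}$, the bound constraints $\ubar{\alpha}^{l,k}_s\lambda_d \leq \nu^{\alpha^{l,k}_s}_d \leq \bar{\alpha}^{l,k}_s\lambda_d$ in \eqref{eq:p-split-multicon}, combined with $\lambda_d \geq 0$, already imply
\[
LB_\mathcal{X}(\rho\,\alpha^{l,k}_s)\,\lambda_d \;\leq\; \rho\,\nu^{\alpha^{l,k}_s}_d \;\leq\; UB_\mathcal{X}(\rho\,\alpha^{l,k}_s)\,\lambda_d .
\]
For $\rho = 1$ this is immediate because, by the definition of the bounds analogous to \eqref{eq:bounds_alpha}, $\ubar{\alpha}^{l,k}_s = LB_\mathcal{X}(\alpha^{l,k}_s)$ and $\bar{\alpha}^{l,k}_s = UB_\mathcal{X}(\alpha^{l,k}_s)$; for $\rho = -1$ it follows by multiplying the inequality chain by $-1$ (using $\lambda_d \geq 0$) and noting $LB_\mathcal{X}(-\alpha^{l,k}_s) = -UB_\mathcal{X}(\alpha^{l,k}_s)$ and $UB_\mathcal{X}(-\alpha^{l,k}_s) = -LB_\mathcal{X}(\alpha^{l,k}_s)$.

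First I would fix indices $l \in \mathcal{D}$, $k,j \in \mathcal{C}_l$, $s \in \{1,\dots,P\}$ and signs $\rho_1,\rho_2 \in \{-1,1\}$, and for each $d \in \mathcal{D}$ write the displayed inequality twice: once for the pair $(k,\rho_1)$ and once for $(j,\rho_2)$. Adding the two upper bounds and, separately, the two lower bounds yields
\[
\bigl(LB_\mathcal{X}(\rho_1\alpha^{l,k}_s)+LB_\mathcal{X}(\rho_2\alpha^{l,j}_s)\bigr)\lambda_d \;\leq\; \rho_1\nu^{\alpha^{l,k}_s}_d + \rho_2\nu^{\alpha^{l,j}_s}_d \;\leq\; \bigl(UB_\mathcal{X}(\rho_1\alpha^{l,k}_s)+UB_\mathcal{X}(\rho_2\alpha^{l,j}_s)\bigr)\lambda_d .
\]
Then I would invoke the hypothesis \eqref{eq:redundant_test}, which says precisely that the two parenthesised sums equal $LB_\mathcal{X}(\rho_1\alpha^{l,k}_s+\rho_2\alpha^{l,j}_s)$ and $UB_\mathcal{X}(\rho_1\alpha^{l,k}_s+\rho_2\alpha^{l,j}_s)$, respectively. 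Substituting, the chain above becomes verbatim the pair of linking constraints introduced for $\alpha^{l,k}_s$ and $\alpha^{l,j}_s$. Since this derivation is valid for every $d \in \mathcal{D}$, all linking constraints for this combination are implied, hence redundant. An equivalent route is to carry out the same two-line aggregation one level earlier, at the level of the $P$-split representation: there $\ubar{\alpha}^{l,k}_s \leq \alpha^{l,k}_s \leq \bar{\alpha}^{l,k}_s$ already appears inside each disjunct, so \eqref{eq:linking} is redundant before taking the convex hull, and the convex hull of a disjunction with a redundant inequality inherits that redundancy.

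I do not expect a genuine obstacle: the argument is a single constraint-aggregation step followed by a substitution using \eqref{eq:redundant_test}. The only point requiring care is the sign bookkeeping when $\rho_1$ or $\rho_2$ equals $-1$ — one must use $LB_\mathcal{X}(-\,\cdot\,) = -UB_\mathcal{X}(\,\cdot\,)$, $UB_\mathcal{X}(-\,\cdot\,) = -LB_\mathcal{X}(\,\cdot\,)$, and $\lambda_d \geq 0$ so that scaling the inequalities by $\pm 1$ is valid — together with the observation that the quantities $\ubar{\alpha}^{l,k}_s$, $\bar{\alpha}^{l,k}_s$ appearing in \eqref{eq:p-split-multicon} are by construction exactly $LB_\mathcal{X}(\alpha^{l,k}_s)$, $UB_\mathcal{X}(\alpha^{l,k}_s)$, so the formulation's bounds and the $LB_\mathcal{X}/UB_\mathcal{X}$ notation of \eqref{eq:linking} are consistent.
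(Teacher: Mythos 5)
Your proof is correct and follows the same route as the paper, which simply states that under hypothesis \eqref{eq:redundant_test} the linking constraint is obtained by aggregating the corresponding bound constraints. You merely spell out the details the paper leaves implicit (the per-$d$ aggregation of the $\nu$-bound constraints and the sign bookkeeping for $\rho_1,\rho_2=-1$), so there is no substantive difference.
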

\begin{proof}
For any combination such that \eqref{eq:redundant_test} hold, the resulting linking constraint can be obtained by aggregating the corresponding bound constraints. 
\end{proof}
Proposition \ref{prop:linking} suggests two rules for selecting combinations of linking constraints:
\begin{enumerate}
    \item Only combinations such that the corresponding partitioning of the $\boldsymbol{x}$ variables overlap can result in a non-redundant linking constraint.
    \item For the simple case of linear disjunctions over a box domain $\mathcal{X}$, only combinations such that at least one pair of the $\boldsymbol{x}$-variables in the expressions representing the $\alpha$-variables have different signs can result in non-redundant linking constraints.
\end{enumerate}

\begin{figure}[t]
    \centering
    \begin{subfigure}[t]{.32\linewidth}
    \includegraphics[trim={1.5cm 0cm 2cm 0cm},clip,width=.99\linewidth]{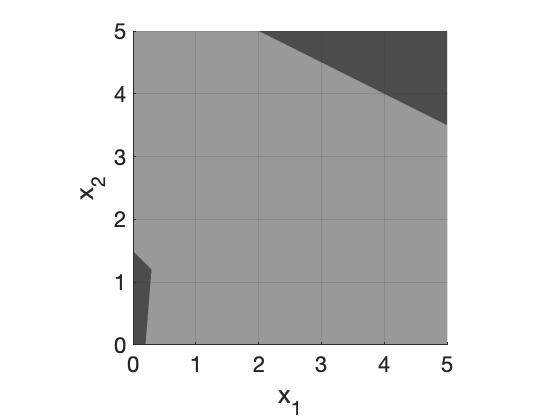}
    \caption{\centering 1-split/big-M}
    \end{subfigure}
    \begin{subfigure}[t]{.32\linewidth}
    \includegraphics[trim={1.5cm 0cm 2cm 0cm},clip,width=.99\linewidth]{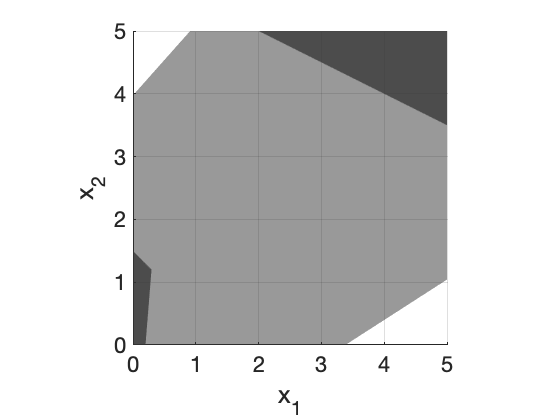}
    \caption{\centering 2-split}
    \end{subfigure}
    \begin{subfigure}[t]{.32\linewidth}
    \includegraphics[trim={1.5cm 0cm 2cm 0cm},clip,width=.99\linewidth]{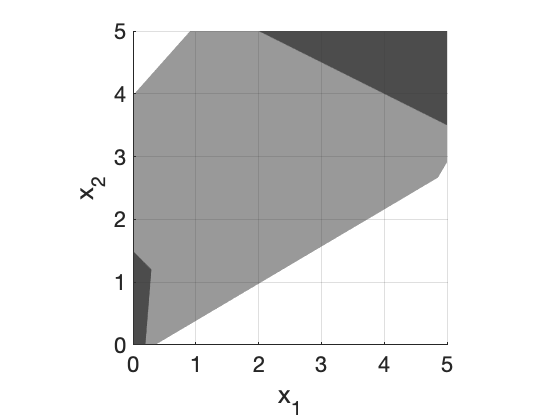}
    \caption{\centering 2-split + linking constraints}
    \end{subfigure}
      \caption{The dark regions show the feasible set of \eqref{eq:example2} in the $(x_1,x_2)$-space. The light grey areas show the continuously relaxed feasible set of the $1$-split/big-M formulation, $2$-split formulation, and $2$-split formulation with linking constraints.}
    \label{fig:relaxations_linkin}
\end{figure}
\subsubsection{Illustrative example}
A simple example illustrates how linking constraints can affect the continuous relaxation of a $P$-split formulation. Consider the disjunctive constraint
\begin{equation}
\label{eq:example2}
\tag{ex-2}
\begin{aligned}
    &\begin{bmatrix}
     x_1 + x_2 + x_3 + x_4  \leq 1.5 \\
     1.5x_1 - 1.2x_2 + x_3 - x_4  \leq -1
     \\
    \boldsymbol{x} \in [0
    ,\ 5]^4
    \end{bmatrix}
    \
    \lor \
    \begin{bmatrix}
    -1x_1 - 2x_2 - x_3 - 2x_4 \leq -26\\
    -2x_1 +x_2 +x_3 -0.5x_4 \leq -1\\
    \boldsymbol{x} \in [0
    ,\ 5]^4
    \end{bmatrix}.
\end{aligned}
\end{equation}
We form a $2$-split formulation, using the variable partitioning $\{x_1, x_2\}$ and $\{x_3, x_4\}$, and the tightest valid big-M formulation with individual M-coefficients for each constraint. To this $2$-split formulation, we also add the above linking constraints. We only consider linking constraints between the $\alpha$-variables within each disjunct, which results in 16 linking constraints. 
Figure~2 depicts the continuous relaxation of the $2$-split formulation with and without the linking constraints: the linking constraints result in a significantly tighter relaxation.

The simple example \eqref{eq:example2} shows that linking constraints can significantly strengthen a $P$-split formulation for a disjunction with multiple constraints per disjunct. Even with the selection rules, the number of non-redundant linking constraints can increase significantly with both $P$ and the number of constraints per disjunct. Fortunately, the number of potential linking constraints is independent of the number of variables in the original disjunction. For a moderate number of splits and constraints per disjunct, it can be favorable to include all linking constraints that satisfy the two non-redundant criteria suggested by Proposition 2. Otherwise, a possible heuristic selection rule could be to greedily select the linking constraints with the largest deviations between the combined and separate upper/lower bounds of the $\alpha$-variables. More advanced strategies for selecting linking constraints are beyond the scope of this paper.

\section{Beyond convex disjuncts}\label{sec:5}
So far we have assumed that all constraints $h_{i,k}$ are convex. But, the $P$-split framework together with convex underestimators can directly be applied to disjunctive constraints with nonconvexities within the disjuncts to form a valid relaxation. We prove that the main properties of the $P$-split formulations still hold with nonconvex constraint functions if we limit the variables to a bounded box $\mathcal{B}$, defined by the variables' upper and lower bounds, instead of a more general convex set $\mathcal{X}$.

Given a disjunctive constraint where some of the functions $h_{i,k}$ are nonconvex, a natural approach is to first convexify the constraints of each disjunct to obtain 
\begin{equation}
\begin{aligned}
\label{eq:n-conv_disjunction}
    &\underset{l \in \mathcal{D}}{\lor} \begin{bmatrix} \underset{\mathcal{B}}{\text{convenv}} \left(\sum\limits_{i=1}^{n} h_{i,k}(x_i)\right) \leq b_{l,k} \quad \forall k \in \mathcal{C}_{l}\\ 
    \boldsymbol{x} \in \mathcal{B}
    \end{bmatrix}\\
\end{aligned},
\end{equation}
where $\underset{\mathcal{B}}{\text{convenv}}(f)$ denotes the convex envelope of function $f$ over the box $\mathcal{B}$.  First convexifying the constraints of each disjunct and then convexifying the disjunctive constraint makes sense, especially from a computational point of view. An important justification for this approach is stated in the following proposition, where  $\text{convh}()$ denotes the convex hull operator.
\begin{proposition} \label{prop:prop3}
    If $\mathcal{B}\bigcap
    \left(\bigcap\limits_{k \in \mathcal{C}_{l}}
      \left\{ \boldsymbol{x} \in \R^n :\ 
    \underset{\mathcal{B}}
{\mathrm{convenv}}\left(\sum\limits_{i=1}^{n} h_{ik}(x_i)\right) \leq b_{l,k} \right\}\right) =\\  {\mathrm{convh}}\left(\mathcal{B} \bigcap\limits_{k \in \mathcal{C}_{l}} \left\{ \boldsymbol{x} \in \R^n :\
 \sum\limits_{i=1}^{n} h_{ik}(x_i) \leq b_{l,k} \right\}\right)$ for all $l\in \mathcal{D}$, then 
    
    \begin{multline*}
            \mathrm{convh} \left(\underset{l \in \mathcal{D}}{\lor} \begin{bmatrix} \underset{\mathcal{B}}{\mathrm{convenv}} \left(\sum\limits_{i=1}^{n} h_{ik}(x_i)\right) \leq b_{l,k} \ \forall k \in \mathcal{C}_{l}\\ 
    \boldsymbol{x} \in \mathcal{B}
    \end{bmatrix}\right) =   \\   \mathrm{convh} \left(\underset{l \in \mathcal{D}}{\lor} \begin{bmatrix}  \sum\limits_{i=1}^{n} h_{ik}(x_i) \leq b_{l,k} \ \forall k \in \mathcal{C}_{l}\\ 
    \boldsymbol{x} \in \mathcal{B}
    \end{bmatrix}\right).
    \end{multline*}
\end{proposition}
\begin{proof}
    Follows trivially from the fact that the convex hull operator on a set does not introduce any new extreme points. 
\end{proof}

However, taking the convex hull of the convexified disjunctive constraint \eqref{eq:n-conv_disjunction} is not guaranteed to give the convex hull of the original disjunctive constraint, because, in general $\big{\{} {\text{convh}}\left(g(x) \leq 0 \cap  \mathcal{B}\right)\big{\}} \subset \big{\{}\underset{\mathcal{B}}{\text{convenv}}\left(g(x)\right) \leq 0 \big{\}}$. Therefore, the sequential convexification can result in a further relaxation that needs to be handled by another technique, such as spatial branching, to guarantee a correct solution. 
Note that Propositon~\ref{prop:prop3} holds not only for a bounded box $\mathcal{B}$, but for convex sets. However, we state the proposition for set $\mathcal{B}$ for clarity and to match the other results. 

To construct and analyze the resulting $P$-split formulation we need a property of the convex envelope which is formally stated in the following proposition. 

\begin{proposition} (Theorem IV.8, Horst and Tuy \cite{horst1996global})
\label{prop:prop4}
The convex envelope of a separable function over a box domain can be decomposed into a sum of convex envelopes, 
$$\underset{\mathcal{B}}{\mathrm{convenv}} \left(\sum\limits_{i=1}^{n} h_{ik}(x_i)\right) = \sum\limits_{i=1}^{n} \underset{\mathcal{B}}{\mathrm{convenv}}\left(h_{ik}(x_i)\right).$$    
\end{proposition}
An important consequence of Proposition~\ref{prop:prop4} is that we can take the convex envelopes from any partitioning of the summed components and obtain an equally strong convex underestimator overall. Thus, we can consider the constraints of each disjunct in \eqref{eq:n-conv_disjunction} to be convex and fully separable and deal with them as in the previous section. 

To form a $P$-split formulation we take the convex envelope of the partial sum defined by the split partitionings $\mathcal{I}_s$, and the constraints in the disjuncts will then be partitioned according to 
\begin{align}
    &\underset{\mathcal{B}}{\text{convenv}}\left(\underset{i \in \mathcal{I}_s}{\sum}h_{i,k}(x_i)\right) \leq \alpha^{l,k}_s,  \label{eq:n-con_con1}\\
    &\sum_{s=1}^P \alpha^{l,k}_s  \leq b_{l,k}.
\end{align}
The only difference in the resulting $P$-split formulation is that we replace the constraints on line 4 in \eqref{eq:p-split-multicon} with \eqref{eq:n-con_con1}. Corollary \ref{cor:4} shows that the relaxation strength properties for the $P$-split formulations also hold when applied to the disjunctive constraint \eqref{eq:n-conv_disjunction}.

\begin{corollary}\label{cor:4}
A 1-split formulation is equivalent to the big-M formulation when applied to \eqref{eq:n-conv_disjunction}. Furthermore, splitting one group of variables in a $P$-split formulation, to form a
$(P+1)$-split formulation, results in an as tight or tighter continuous relaxation if the
bounds on the $\alpha$-variables are additive.
\end{corollary}
\begin{proof}
From Proposition~\ref{prop:prop4}, it is clear that the convex envelopes can be formed separately on any partitioning of the constraints without affecting the relaxation strength. The rest of the proof follows trivially from Theorem~\ref{thm:p_split_mult}.  
\end{proof}
Under specific circumstances, it is possible to obtain the convex hull of the disjunctive constraint~\eqref{eq:n-conv_disjunction} through a $P$-split formulation. This is summarized in the following corollary.
\begin{corollary}
If $\underset{\mathcal{B}}{\mathrm{convenv}} \left(\sum\limits_{i=1}^{n} h_{ik}(x_i)\right)$ is an 
affine function, or representable by linear constraints, and
\begin{multline*}
\left\{ \mathcal{B} \bigcap\limits_{k \in \mathcal{C}_{l}} \left\{ \boldsymbol{x} \in \mathbb{R}^n : \underset{\mathcal{B}}{\mathrm{convenv}}\left(\sum\limits_{i=1}^{n} h_{ik}(x_i)\right) \leq b_{l,k}\right\} \right\} \\= \left\{ {\mathrm{convh}}\left(\mathcal{B} \bigcap\limits_{k \in \mathcal{C}_{l}}\left\{ \boldsymbol{x} \in \mathbb{R}^n : 
 \sum\limits_{i=1}^{n} h_{ik}(x_i) \leq b_{l,k}\right\} \right)\right\},
 \end{multline*}
 then a continuous relaxation of the $n$-split formulation forms the convex hull of \eqref{eq:n-conv_disjunction}. 
\end{corollary}
\begin{proof}
As the intersection of the resulting linear constraints in each disjunct\\ $\underset{\mathcal{B}}{\text{convenv}} \left(\sum\limits_{i=1}^{n} h_{ik}(x_i)\right) \leq b_{l,k}$ forms the convex hull of the feasible set of each disjunct, the proof follows directly from Theorem~\ref{thm:sufficient}.
\end{proof}

Finally, a practical comment about the case with non-convex constraints in the disjuncts. If a $P$-split formulation is used in conjunction with a global MINLP solver based on convex/concave underestimators, then the \say{convex} formulation \eqref{eq:p-split-multicon} can be directly given to the solver, as the solver will handle the convex relaxation in \eqref{eq:n-con_con1} on its own, and all the properties mentioned above hold. 

\section{Computational considerations and further enhancements} \label{sec:6}
The previous sections describe the general framework for generating $P$-split formulations. This section focuses on more specific implementation details, such as how to partition the original variables into the partitioning sets $\mathcal{I}_1, \dots, \mathcal{I}_P$. These details may have a significant impact on the computational performance.

If the problem is close to symmetric with respect to all variables of a disjunctive constraint, \ie similar range and coefficients/expressions for all variables, then there is no reason to believe that one partitioning should have a computational advantage over another. But, for some problems, certain variables can have a much stronger impact on the constraints of the disjuncts. For example, weights on the inputs to ReLU neural networks may differ by orders of magnitude. Our conference paper focusing on neural networks \cite{tsay2021partition} investigates several partitioning strategies. The strategy of partitioning the variables into sets of equal size and grouping variables with similar coefficients into the same partitioning was computationally favorable, and was also motivated based on a theoretical analysis of the resulting continuous relaxation \cite{tsay2021partition}. This partitioning strategy is also available in our open-source code OMLT \cite{ceccon2022omlt} for neural networks.
Variables can also be partitioned into sets of unequal size, but without prior information that a small subset of variables are more important, it is unlikely that such a strategy would be favorable.

Bounds on the auxiliary variables are important in the $P$-split formulation, but for simplicity and general applicability we have only considered globally valid bounds. The bounds $ \ubar{\alpha}^l_s$ and $ \bar{\alpha}^l_s$ in \eqref{eq:bounds_alpha} are simply defined as upper and lower bounds of the individual expressions over the convex set $\mathcal{X}$. As such, the bounds $\ubar{\alpha}^l_s$ and $ \bar{\alpha}^l_s$ do not account for dependencies among the auxiliary variables introduced by the constraints of the disjuncts. It would be possible to utilize different bounds for the $\alpha$-variables in each disjunct. Such bounds could potentially be significantly tighter, as they must only be valid within the specific disjuncts; however, practically obtaining such bounds for the auxiliary variables becomes a non-trivial task. These tighter bounds can be obtained through optimization-based bounds tightening (OBBT), but unless the disjunct is linear, some of the OBBT problems will be nonconvex. For disjunctions with only linear constraints, and for small $P$, it may be favorable to perform OBBT on the auxiliary variables to capture dependencies and obtain stronger local bounds. Note that locally valid bounds on the auxiliary variables can be directly included in the $P$-split formulation, without any extra constraints, and should always be used if readily available. 

\begin{figure}[t]
    \centering
    \begin{subfigure}[t]{.32\linewidth}
    \includegraphics[trim={1.5cm 0cm 2cm 0cm},clip,width=.99\linewidth]{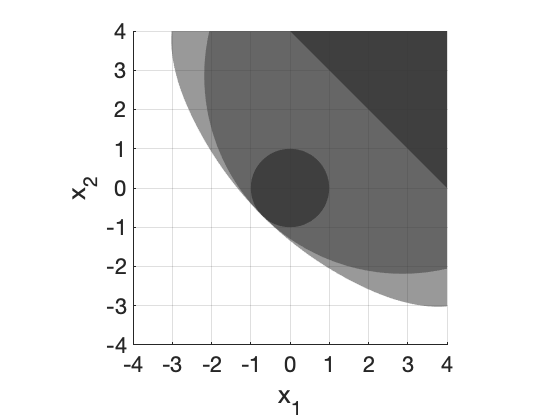}
    \caption{\centering $2$-split $\left(\{x_1, x_2\}, \{x_3, x_4\}\right)$}
    \end{subfigure}
    \begin{subfigure}[t]{.32\linewidth}
    \includegraphics[trim={1.5cm 0cm 2cm 0cm},clip,width=.99\linewidth]{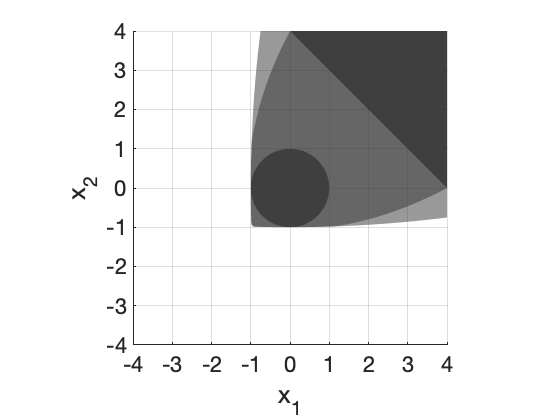}
    \caption{ $4$-split}
    \end{subfigure}
      \caption{The figures show the feasible sets of the continuous relaxations of the $2$-split and $4$-split formulations with and without the local bounds for \eqref{eq:example1}. The darker regions show the relaxations of the $P$-split formulations with the tighter local bounds. For the $1$-split, it is not possible to obtain tighter local bounds. }
    \label{fig:relaxations_independent}
\end{figure}
Our numerical comparisons do not include locally valid bounds on the $\alpha$-variables, as such bounds are typically not easily obtainable and could give the $P$-split formulations an unfair advantage. But to illustrate the potential, we have determined locally valid bounds for all the $\alpha$-variables for  \eqref{eq:example1} and used them in the $P$-split formulation. Figure~3 shows how the tighter locally valid bounds strengthen the continuous relaxation notably for both the $2$-split and $4$-split formulations. Due to the nonlinear constraints in the disjunct, we cannot recover the true convex hull of the disjunction in \eqref{eq:example1} through a $P$-split formulation. But the $4$-split with locally valid bounds is a good approximation of the convex hull.

\section{Numerical comparison}\label{sec:7}

The previous sections compare the $P$-split formulations against the big-M and convex hull in terms of relaxation strength. But the computational performance of a formulation also depends on complexity of its continuous relaxation. To test and compare the formulations, we apply the $P$-split, big-M, and convex hull formulations to a set of 344 test problems. We consider three challenging classes of problems matching Assumptions 1--4. 
The $P$-split framework applies to general disjunctions with nonlinear constraints, but we limit the numerical comparison to disjuncts with linear or convex quadratic constraints \ie problems where the convex hull of the disjunction is representable by a polyhedron or (rotated) second-order cone constraints \cite{ben2001lectures}. This ensures that the convex hull formulation can be directly solved by software such as Gurobi for comparison. This highlights a computational advantage of the $P$-split formulations: they avoid some numerical difficulties typically associated with the convex-hull formulation of nonlinear disjunctions \cite{sawaya2007computational} by linearizing the constraints of the disjunction. 

\subsection{Test instances}
We first briefly describe the problems used in the numerical tests. 
These specific problem classes allow us to construct non-trivial, but feasible, optimization problems of varying dimensionality and difficulty.   

\subsubsection{P\_ball problems}
The P\_ball problems are challenging disjunctive test problems \cite{kronqvist2020disjunctive} which assign $P$-points to $n$-dimensional unit balls such that the total $\ell_1$ distance between all points is minimized and only one point is assigned to each unit ball. The combinatorial difficulty depends on the numbers of balls and points, and the numerical difficulties increase with the dimensionality of the balls. Higher dimensonality leads to more nonlinear terms, and the upper bounds on the $\alpha$-variables, including the big-M coefficients, increase with the number of variables.
The disjunctive structure 
arises from assigning each point to a ball, and each disjunct contains a convex quadratic constraint.

Given the number of $d$-dimensional balls $n_b$ with centers $\boldsymbol{c}^1, \dots, \boldsymbol{c}^{n_b}$ and number of points $n_p$, the P\_ball problems in \cite{kronqvist2020disjunctive} can be written as 
\begin{equation}
    \label{eq:P-ball}
    \begin{aligned}
        & \min &&\sum\limits_{i=1}^{n_b}\sum\limits_{j=i+1}^{n_b} \norm{\boldsymbol{p}^i - \boldsymbol{p}^j}_1\\
        &\text{s.t.} && \underset{i\in\mathcal{N}_b}{\lor}\begin{bmatrix}
            \norm{\boldsymbol{c}^i - \boldsymbol{p}^j}_2^2 \leq 1\\
            b_{ij} = 1
        \end{bmatrix} && j = 1, \dots, n_p,\\
        & && \sum\limits_{j=1}^{n_p} b_{ij} = 1 && i = 1, \dots n_b,\\
        & && \boldsymbol{p}^j \in [0, 10]^d && j = 1, \dots, n_p, \\
        & && b_{ij}\in \{0,1\} &&  i= 1, \dots n_b, j = 1, \dots, n_p,
    \end{aligned}
\end{equation}
where $\boldsymbol{p}^j$ are the points to be assigned to different balls, $\mathcal{N}_b = \{1,\dots,n_b\}$ and the variables $b_{ij}$ are used to ensure each ball is only assigned one point. More details and code for generating P\_ball problems area available on \url{github.com/jkronqvi/points_in_circles}.

By introducing auxiliary variables for the differences between the points and the centers, we express the convex hull by second-order cone constraints \cite{ben2001lectures} in a form suitable for Gurobi. Bounds on the auxiliary variables in the P-split formulation are obtained using the same technique for deriving big-M coefficients in \cite{kronqvist2020disjunctive}, but in the subspace corresponding to the auxiliary $\alpha$-variable. For these problems, it is possible to obtain tight bounds on the auxiliary variables (and tight big-M coefficients) by simply analyzing closed form expressions. Table~\ref{tab:test_probs} gives specific details on the P\_ball problems used in the numerical comparison.

\subsubsection{Finding optimal sparse input features of ReLU-NNs}
Mixed-integer programming has been used for optimizing over trained neural networks (NNs) to quantify extreme outputs, compress networks, and solve verification problems \cite{anderson2020strong,botoeva2020efficient,serra2020lossless}. 
We refer the reader to Huchette et al. \cite{huchette2023deep} for an overview of applications and methods. 
These optimization problems are difficult due to a potentially weak continuous relaxation \cite{anderson2020strong}, and intermediate formulations between big-M and convex hull show promising results \cite{tsay2021partition}. 
This paper uses a similar framework to identify \textit{optimal sparse input features} (OSIF)~\cite{zhao2023model}. The goal is to determine a sparse input that maximizes the prediction of a certain output. We consider NNs trained on the MNIST data set \cite{lecun2010mnist}. 
The goal is then to maximize the probability of the input being classified as a specific number by coloring a limited number of input-image pixels, with the number of pixels limited by the $\ell_1$-norm. Figure~\ref{fig:relaxations_independent_digit} illustrates optimal sparse input features for a NN with varying $\ell_1$-norm. 

The disjunction in the optimization problems arises from encoding the switching behavior of the ReLU nodes. For a NN with $L$ hidden layers the OSIF problem can be written as 
\begin{equation}
\begin{aligned}
     \max_{\boldsymbol{x}_l} &\left(\boldsymbol{w}_L^j\right)^\top \boldsymbol{x}_L\\
    \text{s.t.} &\begin{bmatrix}
    x_{l+1}^i = \left(\boldsymbol{w}_l^i\right)^\top\boldsymbol{x}_l + b_l^i \\ x_{l+1}^i \geq 0
    \end{bmatrix}
    \lor 
    \begin{bmatrix}
\left(\boldsymbol{w}_l^i\right)^\top\boldsymbol{x}_l + b_l^i\leq 0\\
x_{l+1}^i = 0
    \end{bmatrix} && i \in \mathcal{N}_l;\ l 
    \in [0,...,L-1] \\
    & \norm{\boldsymbol{x}_0}_1 \leq \kappa, \quad \boldsymbol{x}_0\in[0, 1]^n, \ \boldsymbol{x}_l\in \mathbb{R}_+^{|\mathcal{N}_l|}\ \ \forall l \in [1,...,L]
    \end{aligned}
\end{equation}
where $\boldsymbol{x}_0$ is the input vector and $\boldsymbol{x}_l$ denotes the outputs of layer $l$. Index $j$ is the classification category that we are maximizing the probability for, and $\kappa$ is a parameter limiting the coloring of input pixels.  Sets $\mathcal{N}_l$ contain the indexes of the nodes in the $l$-th layer, and  $\boldsymbol{w}_l^i$, $ b_l^i$ are the trained weights and biases. For more details of mixed-integer ReLU-NN encodings, see \cite{anderson2020strong,tsay2021partition}.

\begin{figure}[t]
    \centering
    \begin{subfigure}[t]{.16\linewidth}
    \includegraphics[trim={3.5cm 1.1cm 2cm 0cm},clip,width=.99\linewidth]{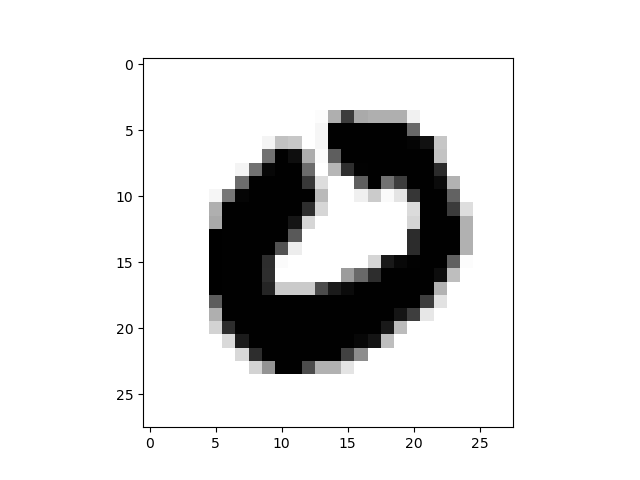}
    \caption{\centering MNIST sample image}
    \end{subfigure}
    \begin{subfigure}[t]{.16\linewidth}
    \includegraphics[trim={3.5cm 1.1cm 2cm 0cm},clip,width=.99\linewidth]{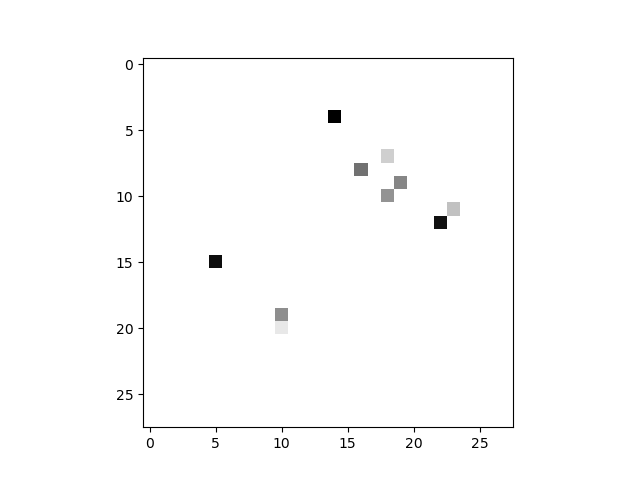}
    \caption{\centering $\norm{\cdot}_1 \leq 5$}
    \end{subfigure}
    \begin{subfigure}[t]{.16\linewidth}
    \includegraphics[trim={3.5cm 1.1cm 2cm 0cm},clip,width=.99\linewidth]{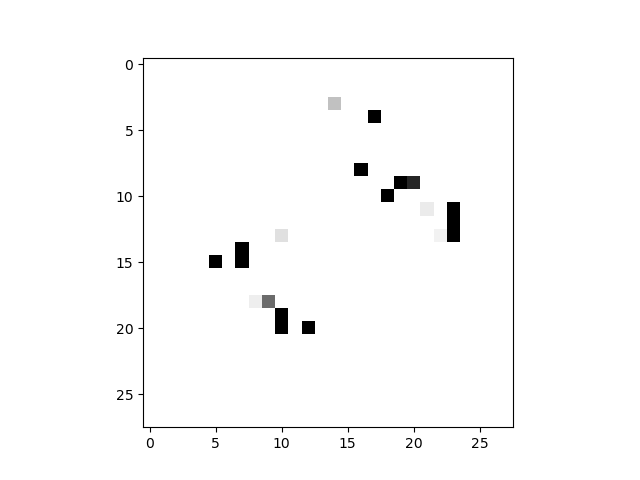}
    \caption{\centering  $\norm{\cdot}_1 \leq 15$}
    \end{subfigure}
       \begin{subfigure}[t]{.16\linewidth}
    \includegraphics[trim={3.5cm 1.1cm 2cm 0cm},clip,width=.99\linewidth]{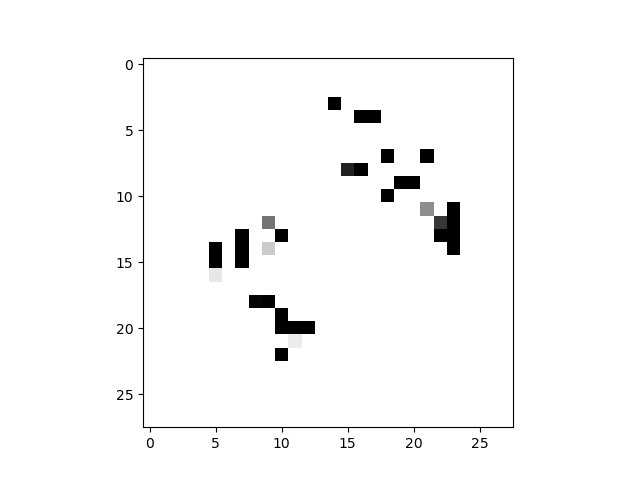}
    \caption{\centering  $\norm{\cdot}_1 \leq 30$}
    \end{subfigure}

    \begin{subfigure}[t]{.16\linewidth}
    \includegraphics[trim={3.5cm 1.1cm 2cm 0cm},clip,width=.99\linewidth]{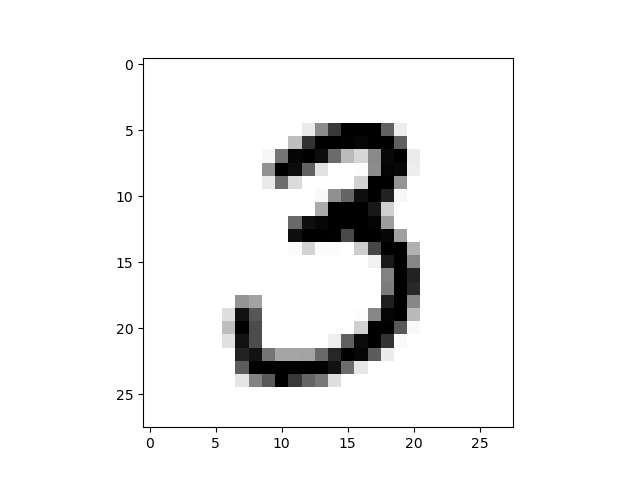}
    \caption{\centering MNIST \phantom{a} sample image}
    \end{subfigure}
    \begin{subfigure}[t]{.16\linewidth}
    \includegraphics[trim={3.5cm 1.1cm 2cm 0cm},clip,width=.99\linewidth]{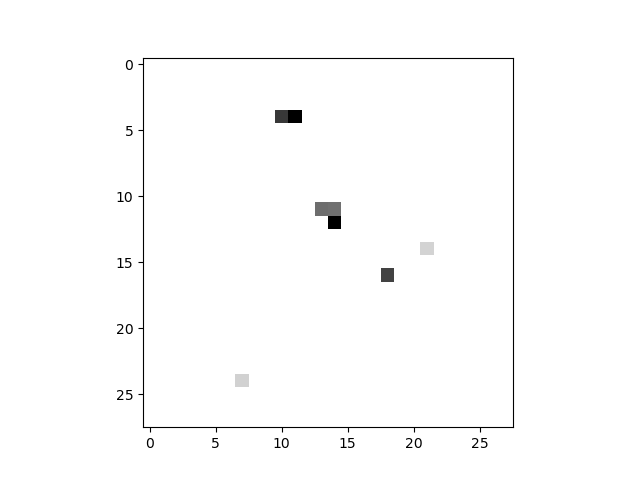}
    \caption{\centering $\norm{\cdot}_1 \leq 5$}
    \end{subfigure}
    \begin{subfigure}[t]{.16\linewidth}
    \includegraphics[trim={3.5cm 1.1cm 2cm 0cm},clip,width=.99\linewidth]{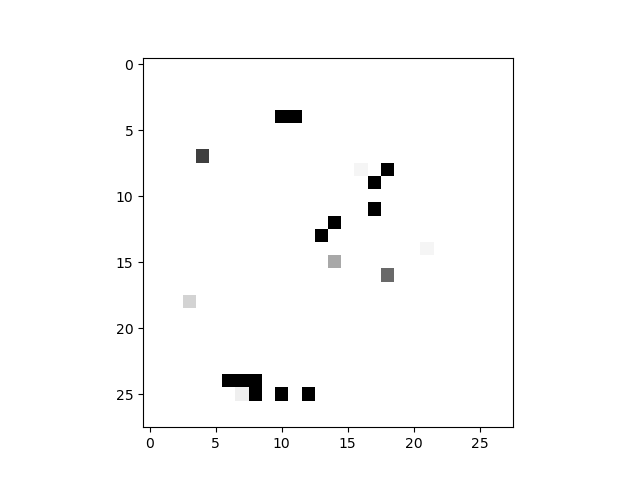}
    \caption{\centering  $\norm{\cdot}_1 \leq 15$}
    \end{subfigure}
       \begin{subfigure}[t]{.16\linewidth}
    \includegraphics[trim={3.5cm 1.1cm 2cm 0cm},clip,width=.99\linewidth]{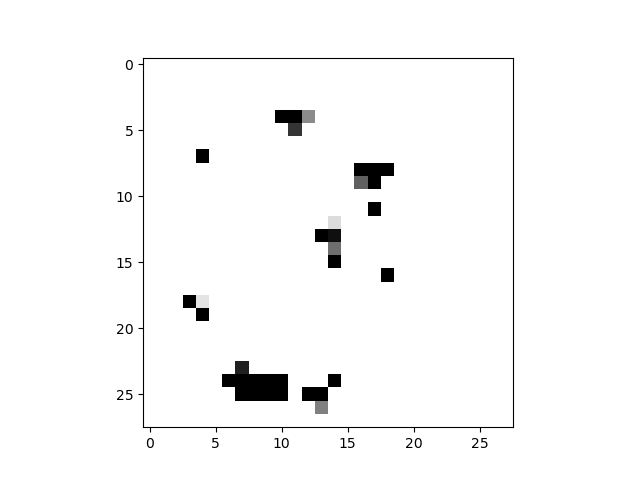}
    \caption{\centering  $\norm{\cdot}_1 \leq 30$}
    \end{subfigure}
      \caption{Optimal sparse input features for numbers 0 and 3 with different $\ell_1$-constraints on the input, along with two sample images. The NN has two hidden layers with 50 nodes in each, and is available: \url{github.com/cog-imperial/PartitionedFormulations_OSIF}.} 
    \label{fig:relaxations_independent_digit}
\end{figure}

Our numerical tests use NNs with two hidden layers, with either 50 or 75 nodes per layer, trained to classify handwritten digits from the MNIST data set. The networks were implemented and trained using the Adadelta solver in PyTorch \cite{pytorch}. Default hyperparameter values were used, except for the learning rate, which was set to 1. For each NN, we optimize the input for each classification category, resulting in ten optimization problems per network (classes for digits 0 through 9). For the numerical tests, we limit the $\ell_1$-norm of the input to be less than 5 or 10, which is mainly to keep the problems solvable within a reasonable time limit. For the NNs there are several alternatives for deriving bounds, e.g., bound propagation and interval arithmetic, $\alpha$-CROWN \cite{xu2020fast}, or optimization-based bounds tightening (OBBT) by solving LP subproblems \cite{tsay2021partition}. We test both bounds computed by interval arithmetic and stronger (but computationally expensive) bounds computed by OBBT to also compare the impact of the bound strength on the formulations. We partition the variables using the Tsay et al. \cite{tsay2021partition} strategy based on node weights. Table~\ref{tab:test_probs} gives details on the OSIF problems used in the numerical comparison. The networks used and the code for generating the test problems are available on \url{github.com/cog-imperial/PartitionedFormulations_OSIF}.

\subsubsection{K-means clustering}
K-means clustering \cite{macqueen1967some} is a classical problem in data analysis and machine learning that is known to be NP-hard \cite{aloise2009np}. The goal is to partition data points into clusters such that the variance within each cluster is minimized. Using the Papageorgiou and Trespalacios \cite{papageorgiou2018pseudo} convex disjunctive programming formulation
\begin{equation}
\label{eq:clustering}
    \begin{aligned}
        & \min_{\mathbf{r} \in \mathbb{R}^L, \boldsymbol{x}^j \in \mathbb{R}^n} &&\sum_{i=1}^L r_i\\
        & \text{s.t.} &&\underset{j \in \mathcal{K}}{\lor} \begin{bmatrix}
        \begin{aligned}
        &\norm{\boldsymbol{x}^j - \mathbf{d}^i}_2^2 \leq r_i\vspace{0.1cm}\\
        &r_i \leq \underset{l \in \{1, \dots L\}}{\max}\norm{\mathbf{d}^{l} - \mathbf{d}^i}_2^2 
        \end{aligned}
        \end{bmatrix}\quad  \forall i \in \{1, \dots, L\},\\
    \end{aligned}
\end{equation}
where $\boldsymbol{x}^j$ are the cluster centers, $\{\mathbf{d}^i\}_{i=1}^L$ are $n$-dimensional data points, and $\mathcal{K} =\{1,2,\dots k\}$. For generating test instances, we used the G2 data set \cite{G2sets,franti2018k} and the MNIST data set. The G2 data set contains clustered data of different dimensionality: we randomly selected a fixed number of data points to generate our test instances. For MNIST, each data point represents an image of a handwritten number with 784 pixels. For the MNIST-based problems, we select the first images of each class ranging from 0 to the number of clusters.

For both the big-M and $P$-split formulations, we used the tightest globally valid bounds. The lower bounds are trivial, and the tightest upper bounds for the auxiliary variables in the P-split formulations are given by the largest squared Euclidean distance between any two data points in the subspace corresponding to the auxiliary variable. For the convex hull formulation, we introduce auxiliary variables for the differences $(\boldsymbol{x}-\mathbf{d})$ to express the convex hull of the disjunctions by rotated second-order cone constraints \cite{ben2001lectures}. For these problems, the $n$-split formulation and the convex hull formulation have an equal number of variables, but different types of constraints. Table~\ref{tab:test_probs} gives specific details.

\begin{table}[h]
\centering
\begin{tabular}{ c| c | c |c } \hline
 name &\ data points \ & \ data dimension \ & \ number of clusters\\
 \hline
 Cluster\_g1 & 20 & 32 & 2\\ 
 Cluster\_g2 & 25 & 32 & 2\\ 
 Cluster\_g3 & 20 & 16 & 3\\ \hline
 Cluster\_m1 & 5 & 784 & 3\\ 
 Cluster\_m2 & 8 & 784 & 2\\ 
 Cluster\_m3 & 10 & 784 & 2\\
 \hline
  &\ number of balls \ & \ number of points \ & \ ball dimension\\
 \hline
 P\_ball\_1\ \ & 8 & 5 & 32\\ 
 P\_ball\_2\ \ & 8 & 4 & 48\\ 
\hline
  &\ input dimension ($d$) \ & \ hidden layers \ & \ $\ell_1$-constraint \\
  \hline
 NN1\_OSIF & 784 & [50, 50] & $\norm{\cdot}_1 \leq 5$ \\
 NN2\_OSIF & 784 & [50, 50] & $\norm{\cdot}_1 \leq 10$ \\
 NN3\_OSIF & 784 & [75, 75] & $\norm{\cdot}_1 \leq 5$ \\
 NN4\_OSIF & 784 & [75, 75] & $\norm{\cdot}_1 \leq 10$ \\
 \hline
\end{tabular}
\caption{Details of the clustering, P$\_$ball, and neural network problems. For the clustering and P\_ball problems we randomly generated 15 instances of each class. The clustering problems with a ``$\_$g'' in the name originate from the G2 data set, and the ones with an ``$\_$m'' from MNIST. For each NN architecture, we trained 5 different NNs, and we optimize each of the 10 classifications individually, giving 50 instances of each class. This forms a test set with 320 optimization problems.} 
\label{tab:test_probs}
\end{table}

\subsubsection{Semi-supervised K-means clustering}
To test the impact of multiple constraints per disjunct, we also consider a modified version of the K-means clustering problem. Instead of assigning individual points to clusters, we assign groups of points to clusters. These problems arise in, \eg semi-supervised clustering \cite{bair2013semi,basu2004probabilistic} where we have known subsets of points in which all points have the same label. See figure Figure~\ref{fig:relaxations_independent_semi} for an illustration of this semi-supervised clustering task. For simplicity, we assume that we have $L$ such subsets that each contains $m$ points. Similar to the K-means clustering problem \eqref{eq:clustering}, this semi-supervised clustering problem can be formulated as

\begin{equation}
\label{eq:G_clustering}
    \begin{aligned}
        & \min_{\mathbf{r} \in \mathbb{R}^L, \boldsymbol{x}^j \in \mathbb{R}^n} &&\sum_{i=1}^L r_i\\
        & \text{s.t.} &&\underset{j \in \mathcal{K}}{\lor} \begin{bmatrix}
        \norm{\boldsymbol{x}^j - \mathbf{d}_1^i}_2^2 \leq r_i\\
        \vdots\\
        \norm{\boldsymbol{x}^j - \mathbf{d}_k^i}_2^2 \leq r_i
        \end{bmatrix}\quad  \forall i \in \{1, 2, \dots, L\},\\
    \end{aligned}
\end{equation}
where $\mathbf{d}_1^i, \dots \mathbf{d}_k^i$ are $n$-dimensional data points belonging to the same group, $\boldsymbol{x}^j$ are the cluster centers, and $\mathcal{K} =\{1,2,\dots k\}$ contains the index of each group, \ie each assigned label. Here we only consider the cases where $L > |\mathcal{K}|$, otherwise, there is a trivial solution to the clustering.

\begin{figure}[t]
    \centering
    \includegraphics[trim={0.1cm 0.1cm 0.1cm 0.1cm},clip,width=.6\linewidth]{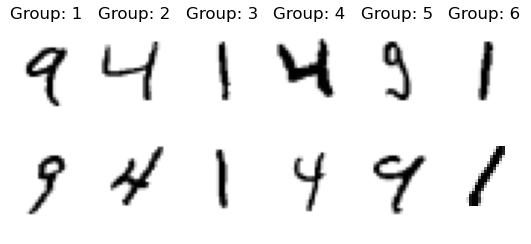}
      \caption{Illustration of a semi-supervised K-means clustering task where the goal is to cluster the 6 groups of images with two images per group. Images are from the MNIST data set \cite{lecun2010mnist}.} 
    \label{fig:relaxations_independent_semi}
\end{figure}

We generate a few problem instances using the  MNIST data set \cite{lecun2010mnist}, and vary the number of data points (images) in each group. We randomly select the data, but such that the data points in each group have the same label and that the number of different labels among the groups matches the desired number of clusters. The characteristics of the instances are shown in Table \ref{tab:test_probs_semi}. The main purpose of these instances is to test the impact of the number of constraints per disjunct on the performance of $P$-split formulations; therefore, the number of groups and clusters is kept constant. We generate 3 different instances of each type by randomly selecting  two labels and data points with these labels from MNIST.

\begin{table}[h]
\centering
\begin{tabular}{ c| c | c |c|c  } \hline
 name &\ data points per group \ & \ \# groups \ & \  \# clusters& \  \# data dimension\\
 \hline
 Semi\_cluster\_1 & 1 & 6 & 2& 784\\ 
Semi\_cluster\_2 & 2 & 6 & 2& 784\\ 
Semi\_cluster\_3 & 3 & 6 & 2& 784\\
Semi\_cluster\_4 & 4 & 6 & 2& 784\\
Semi\_cluster\_5 & 5 & 6 & 2& 784\\
Semi\_cluster\_6 & 6 & 6 & 2& 784\\
Semi\_cluster\_7 & 7 & 6 & 2& 784\\
Semi\_cluster\_8 & 8 & 6 & 2& 784\\\hline

\end{tabular}
\caption{Details of semi-supervised K-means clustering test instances.}
\label{tab:test_probs_semi}
\end{table}

\subsubsection{Computational setup}
We modeled all the formulations using Gurobi's Python interface and solved them using Gurobi. The computational performance of the formulations depends on both the tightness of the continuous relaxation and the computational complexity of the subproblems. To minimize the variations caused by heuristics in Gurobi, we used parameter settings \texttt{MIPFocus} = 3, \texttt{Cuts} = 1, and \texttt{MIQCPMethod} = 1 for all problems. 
We found that using \texttt{PreMIQCPForm} = 2 drastically improves the performance of the extended convex hull formulations for the clustering and P\_ball problems. However, it resulted in worse performance for the other formulations and, therefore, we only used it with the convex hull. Since the NN problems only contain linear constraints, they are only affected by the \texttt{MIPFocus} and \texttt{Cuts} parameters. We applied a time limit of 1800s for all problems.   
Default parameters were used for all other settings and we have used Gurobi version 9. All the clustering and P\_ball problems were solved on a laptop with an i9 9980HK processor and 32GB RAM. The NN problems were solved on desktop computers with i7 8700K processors and 16GB RAM. 

As described in Section~\ref{sec:6}, different variable partitionings can lead to differences in the $P$-split formulations. For the clustering, semi-supervised clustering, and P\_ball problems, we partitioned the variables based on their ordered indices and used the smallest valid M-coefficients and tight globally valid bounds for the $\alpha$-variables. For these problems, the variables are \say{symmetric} in the sense that they have a similar role in the optimization problem and there are no groups of variables that are clearly more important. In such circumstances, the partitioning strategy is likely less important, and initial tests indeed showed little variation from other partitioning strategies. For the NN problems, we obtained bounds using interval arithmetic, or LP-based OBBT, and used the equal size partitioning strategy \cite{tsay2021partition}. For the NN problems, the partitioning strategy is more important as there are variables that play a more prominent role. 

\subsection{Computational results}
Table~\ref{tab:res} summarizes the computational results for the clustering, and P\_ball problems by showing the average time and average number of explored nodes to solve the problems with big-M, $P$-split, and the convex hull formulations. The results show that the $P$-split formulations can have a computational advantage over both the classical big-M formulation and the convex hull formulation. The advantage comes from the continuous relaxation being tighter than the relaxation obtained by big-M, while the subproblems remain computationally cheaper than those originating from the convex hull. The results support the hypothesis that $P$-split formulations can well-approximate the convex hull, as the number of explored nodes are similar with both types of formulations for many of the problems. For several of the problem classes, there is a clear trade-off where the intermediate $P$-split formulations, that are not as tight as possible but with fewer variables and constraints, result in the best computational performance. For the different classes of problems, the optimal value of $P$ varies. However, the various $P$-split formulations perform well overall, and for some classes all the $P$-split formulations we tested outperform both big-M and convex hull formulations. 

The results for the clustering problems are quite interesting. The $P$-split formulations seem particularly well-suited for this class of problems and clearly outperform both the convex hull and big-M formulations in terms of computational time. For these problems, the $n$-split formulation, \ie $|\mathcal{I}_s|=1$, has the same number of variables as the extended convex hull formulation. But, as these problems have nonlinear constraints in the disjuncts, the $n$-split formulation will not necessarily be as strong as the convex hull. However, as the $n$-split formulation effectively linearizes the disjunctions, we end up with simpler constraints. Specifically, to express the convex hull, we need rotated second-order cone constraints, while the $n$-split formulation is represented through linear and convex quadratic constraints.

For the tasks of optimizing the NNs, we did not include the convex hull or $n$-split formulations as these were computationally intractable. Even for the smallest NNs, the $n$-split and convex hull formulations became difficult to handle (it took over 5 minutes to get past the root node). The result that the convex hull formulation for NNs can be computationally inefficient, or even intractable, aligns well with prior observations \cite{anderson2020strong}.

Results for optimizing the NNs are presented in Table~\ref{tab:res_nn}. For the NNs, the smaller $P$-split formulations tend to result in the best computational performance, which can be explained by the size of the problems and their structure. Even for the smallest NNs, we end up with 100 disjunctive constraints and for 50 of these we have 784 variables participating in each disjunctive constraint. Furthermore, the disjunctive constraints appear in a sequential fashion, \ie the outputs of the nodes in the first layer form the input to the second layer. Combined with the fact that tight bounds cannot easily be determined for variables associated with the second layer, we can end up with a weak continuous relaxation even if we use the convex hull of each disjunctive constraint. Therefore, the smaller $P$-split formulations, \ie $P = 2,4$, can have an advantage as the relaxation is tighter than big-M while the subproblems remain of similar size. Using the tighter bounds computed by OBBT reduces the number of explored nodes. The tighter OBBT-based bounds are even more beneficial for the P-split formulations, but overall, we see a similar trend where the smaller $P$-split formulations are the fastest to solve.

Next, we run the test problems presented in Table~\ref{tab:test_probs_semi} with a similar set-up to investigate the performance of $P$-split formulations with varying number of constraints per disjunct. Here we exclude the big-M formulation, as it was clear that it would time out on the majority of the instances. The results are presented in Table~\ref{tab:res2}. The results show similar benefits of the $P$-split formulations, even as the number of constraints per disjoint increases. The convex hull formulations result in fewer explored nodes due to having a stronger continuous relaxation, but as mentioned, this comes at the cost of a computationally more expensive problem formulation. Overall, increasing the number of constraints per disjuncts leads to more difficult problems with more constraints and variables in both the $P$-split and convex hull formulations. Nevertheless, as the number of constraints increases, we still observe a clear advantage of the intermediate $P$-split formulations. 

Overall, the $P$-split formulations perform well 
for the clustering, semi-supervised clustering, and P\_ball problems. For these problems, there is a clear advantage of using an intermediate formulation, and a few variables per partition seems to be the best choice. There is some variation in the choice of $P$ that results in the best performance in terms of computational time, but overall we obtain a good performance with both 2, 3, and 4 variables per partition. As expected, the number of explored nodes decreases with the number of splits as we get stronger relaxations, and for the $n$-split formulations (with only one variable per partition), we are close to the convex hull in terms of the number of nodes explored to solve the problems.

\begin{landscape}
\begin{table}
\centering
\begin{tabular}{ c c | c | c |  c | c | c | c | c } 
\hline
 Problem class & &\ big-M \ & \ $P$-split \ & \ $P$-split & \ $P$-split & \ $P$-split & \ $P$-split & \ convex hull\\
  \multicolumn{2}{r |}{\# vars. per split}&\ \ &  $|\mathcal{I}_s|=8$  & $|\mathcal{I}_s|=4$ & $|\mathcal{I}_s|\approx 3$ & $|\mathcal{I}_s|=2$& $|\mathcal{I}_s|=1$& \\
 \hline
 Cluster\_g1 \ &time & $>$1800 &\cellcolor[gray]{0.8} 13  & \cellcolor[gray]{0.8} 4.5 & \cellcolor[gray]{0.8}4.1 & \cellcolor[gray]{0.8}\textbf{3.6} & \cellcolor[gray]{0.8}6.7 & 61\\
   \multicolumn{2}{r |}{\# nodes\textbackslash gap(\%)} & $100 \%$ & 2105 & 319 & 293 & 127 & \textbf{74} & 112\\
    \hline
     Cluster\_g2 \ &time & $>$1800 &\cellcolor[gray]{0.8} 42  & \cellcolor[gray]{0.8} \textbf{13} & \cellcolor[gray]{0.8} 15 & \cellcolor[gray]{0.8}17 & \cellcolor[gray]{0.8}13 & 402\\
   \multicolumn{2}{r |}{\# nodes\textbackslash gap(\%)} & $100 \%$ & 1515 & 547 & 537 & 522 & \textbf{147} & 160\\
    \hline
 Cluster\_g3 \ &time & $>$1800 & $>$1800  & \cellcolor[gray]{0.8} 840 & \cellcolor[gray]{0.8} 827 & \cellcolor[gray]{0.8}{704} & \cellcolor[gray]{0.8}\textbf{407} & $>$ 1800\\
   \multicolumn{2}{r |}{\# nodes\textbackslash gap(\%)} & $100 \%$ & $4 \%$ & 21821 & 21106 & 21184 & \textbf{18420} & $18 \%$\\
\hline
Cluster\_m1 \ &time & $>$1800 &\cellcolor[gray]{0.8} 50  & \cellcolor[gray]{0.8} 21 & \cellcolor[gray]{0.8} \textbf{20} & \cellcolor[gray]{0.8} 28 & \cellcolor[gray]{0.8} 58 &  1101\\
   \multicolumn{2}{r |}{\# nodes\textbackslash gap(\%)} & $99 \%$ & 2147 & 859 & 617 &443 & 314 & \textbf{310}\\
    \hline
Cluster\_m2 \ &time & $>$1800 &\cellcolor[gray]{0.8} 70  & \cellcolor[gray]{0.8} \textbf{49} & \cellcolor[gray]{0.8} 64  & \cellcolor[gray]{0.8} 101 & \cellcolor[gray]{0.8} 223 &  1339\\
   \multicolumn{2}{r |}{\# nodes\textbackslash gap(\%)} & $100 \%$ & 4101 & 1640 & 1159 & 859& 649 & \textbf{368}\\
    \hline
    Cluster\_m3 \ &time & $>$1800 &\cellcolor[gray]{0.8} \textbf{202}  & \cellcolor[gray]{0.8} \textbf{202} & \cellcolor[gray]{0.8} 254  & \cellcolor[gray]{0.8} 422 & \cellcolor[gray]{0.8} 1028 &  $>$ 1800\\
   \multicolumn{2}{r |}{\# nodes\textbackslash gap(\%)} & $100 \%$ & 8924 & 4035 & 2947 & 2069 & 1512 & $100 \%$\\
   \hline
 P\_ball\_1 \ &time & $>$ 1800 & 1783 & 1140  & 332 &\cellcolor[gray]{0.8} \textbf{81} &\cellcolor[gray]{0.8} 85 & 142 \\
   \multicolumn{2}{r |}{\# nodes\textbackslash gap(\%)} &$100 \%$ & 12882 & 6692 & 6470 & 3247 & 1085 & \textbf{716}\\
    \hline    
P\_ball\_2 \ &time & $>$ 1800 & 750 &\cellcolor[gray]{0.8} 92  &\cellcolor[gray]{0.8} 69 &\cellcolor[gray]{0.8} \textbf{55} &\cellcolor[gray]{0.8} 113 & 121 \\
   \multicolumn{2}{r |}{\# nodes\textbackslash gap(\%)} &$100 \%$ & 21359 & 4853 & 3179 & 1836 & 732 & \textbf{436}\\
\hline
\end{tabular}
\caption{Average time (seconds) and number of explored nodes for solved problems. If more than half of the problems in a class timed out, we report the average remaining gap instead of number of nodes. For the clustering and P\_ball problems, the results are grouped according to the number of variables in each split category. The shaded cells indicate settings where the $P$-split formulations outperformed both the big-M and convex hull formulations in terms of solution time.}
\label{tab:res}
\end{table}
\end{landscape}

\begin{landscape}
\begin{table}
\centering
\begin{tabular}{ c c | c | c |  c | c | c | c | c } 
    \hline
  \rule{0pt}{2.5ex} Problem class &  & \ Big-M & \ $P$-split & \ $P$-split & \ $P$-split & \ $P$-split & \ $P$-split & $P$-split \\
   &  \# splits & \ $P=1$ & \ $P=2$ &\ $P=4$ & \ $P=8$ & \ $P=16$ & $P=32$ & $P=50$ \\
     \hline
NN1\_OSIF \ &time & \textbf{21} & \textbf{21} & 26 & 33 & 77 & 166 & 336 \\
(50 solved) & {\# nodes} & 8039 & 6609 & 6457 & 4520 & 4075 & 3881 & \textbf{2824} \\
     \hline
NN2\_OSIF \ &time & 25 & \cellcolor[gray]{0.8}\textbf{24}  & 32  & 40 & 88 & 175 & 415 \\
(50 solved) & {\# nodes} & 8901 & 6835 & 7102 & 5268 & 4753 & 4508 & \textbf{3145} \\
     \hline
NN3\_OSIF \ &time & 113 &\cellcolor[gray]{0.8} 95  & \cellcolor[gray]{0.8} \textbf{84} & 121 & 305 & 685 & 929 \\
(26 solved) & {\# nodes} & 37650 & 24315 & 12133 & 11853 & 10636 & 10786 & \textbf{10197} \\
    \hline
NN4\_OSIF \ &time & 176 &\cellcolor[gray]{0.8} 131  & \cellcolor[gray]{0.8} \textbf{124} & \cellcolor[gray]{0.8} 172 & 398 & 838 & 1109\\
(25 solved) & {\# nodes} & 54212 & 31064 & 18380 & 14331 & 13139 & 13095 & \textbf{12602} \\
\hline
\hline
NN1\_OSIF* \ &time & 27 & \cellcolor[gray]{0.8}\textbf{10} &\cellcolor[gray]{0.8} 13 &\cellcolor[gray]{0.8} 24 & 58 & 135 & 237 \\
(50 solved) & {\# nodes} & 8846 & 3542 & 3479 & 3991 & 3700 & 3299 & \textbf{2838} \\
     \hline
NN2\_OSIF* \ &time & 28 & \cellcolor[gray]{0.8}\textbf{13} & \cellcolor[gray]{0.8}19 & 33 & 64 & 137 & 275 \\
(50 solved) & {\# nodes} & 9467 & 4843 & 4624 & 4542 & 4213 & 3738 & \textbf{2985}\\
     \hline
NN3\_OSIF* \ &time & 115 & \cellcolor[gray]{0.8}\textbf{47} & \cellcolor[gray]{0.8}64 & \cellcolor[gray]{0.8}105 & 252 & 515 & 826 \\
(26 solved) & {\# nodes} & 33269 & 12454 & 12253 & 10456 & 10602 & \textbf{10048} & 10362 \\
    \hline
NN4\_OSIF* \ &time & 167 & \cellcolor[gray]{0.8}\textbf{66} & \cellcolor[gray]{0.8}98 & \cellcolor[gray]{0.8}156 & 336 & 613 & 1008 \\
(25 solved) & {\# nodes} & 45834 & 16983 & 16999 & 13554 & 12238 & \textbf{10825} & 11514 \\
\hline
\end{tabular}
\caption{Average time (seconds) and number of explored nodes for problems solved by all formulations. The shaded cells indicate settings where the $P$-split formulations outperform both the big-M and convex hull formulations in terms of solution time. In parenthesis, we report the number of instances solved by all formulations. In the first four problem classes, we have used interval arithmetic for computing bounds on the $\alpha$ variables, and the problems marked with $^*$ are the same instances, but with stronger bounds computed by OBBT. The time to compute bounds by OBBT is not included, as the main focus is to compare the formulations with varying bound tightness. }
\label{tab:res_nn}
\end{table}
\end{landscape}

\begin{table}
\centering
\begin{tabular}{ c c  |c |  c | c | c | c | c } 
\hline
 Problem class & \ & \ $P$-split \ & \ $P$-split & \ $P$-split & \ $P$-split & \ $P$-split & \ CH\\
  \multicolumn{2}{r |}{\# vars. per split}&\ $|\mathcal{I}_s|=8$  & $|\mathcal{I}_s|=4$ & $|\mathcal{I}_s|\approx 3$ & $|\mathcal{I}_s|=2$& $|\mathcal{I}_s|=1$& \\
 \hline
Semi\_cluster\_1 &time  &5.5  & 5.5 & \textbf{5.4} & 7.8 &  16 & 131\\
\multicolumn{2}{r |}{\# nodes\textbackslash gap(\%)} & 701 & 220 & 117 & 83 & \textbf{57} & 58\\
    \hline
Semi\_cluster\_2 &time & 61  &  \textbf{28} & \textbf{28} & 38 &  53 & 1418\\
\multicolumn{2}{r |}{\# nodes\textbackslash gap(\%)} & 861  & 340 & 240 & 139 & 92 & \textbf{70}\\
    \hline
Semi\_cluster\_3 &time  & 257  & \textbf{102} &110 & 71 &  209 & 1795\\
\multicolumn{2}{r |}{\# nodes\textbackslash gap(\%)} &781 & 217 & 192 & 159 & 92 & \textbf{43}\\
    \hline
Semi\_cluster\_4 &time  & 565  & 289 & \textbf{173} & 199 &  310 & $> 1800$\\
\multicolumn{2}{r |}{\# nodes\textbackslash gap(\%)} & 763 & 257 & 188 & 146 & \textbf{95} & $70 \%$\\
    \hline
Semi\_cluster\_5 &time  & 843  &  420 & 393 & \textbf{199} &  597 & $> 1800$\\
\multicolumn{2}{r |}{\# nodes\textbackslash gap(\%)} & 625  & 244 & 179 & 167 & \textbf{103} & $76 \%$ \\
    \hline
Semi\_cluster\_6 &time  & 1270  &  685 & 598 & \textbf{371} & 1139 & $> 1800$\\
\multicolumn{2}{r |}{\# nodes\textbackslash gap(\%)} & 717 & 240 & 204 & 163 & \textbf{129} & $85 \%$ \\
    \hline
Semi\_cluster\_7 &time & $> 1800$  &  529 & \textbf{435} & 467 &  1233 & $> 1800$\\
\multicolumn{2}{r |}{\# nodes\textbackslash gap(\%)} & $22 \%$  & 276 & 199 & 177 & \textbf{108} & $92 \%$ \\
    \hline
Semi\_cluster\_8 &time & 1691  &  991 &634 & \textbf{469} &  1493 & $> 1800$\\
\multicolumn{2}{r |}{\# nodes\textbackslash gap(\%)} & 574 & 220 & 208 & 167 & \textbf{108} &  $94 \%$\\
    \hline
\end{tabular}
\caption{Average time (seconds) and number of explored nodes for solved problems with $P$-split formulations and convex hull (CH) formulation. If more than half of the problems in a class time out, we report the average remaining gap instead of the number of nodes. Instead of the number of splits, we list the number of variables in each split partitioning, \textit{i.e.}, $|\mathcal{I}_s|$. As 784 is not divisible by 3, it is not possible to partition the expressions perfectly into groups of 3. Instead we have one partitioning with only one variable and 261 partitionings with 3 variables. The number in the instance name gives the number of constraints per disjoint.}
\label{tab:res2}
\end{table}

\section{Conclusions}\label{sec:8}

We presented $P$-split formulations, a framework for generating a class of mixed-integer formulations of disjunctive constraints. The continuous relaxations of $P$-split formulations are between the relaxations of the big-M and convex hull formulations. We proved that the strength of the $P$-split formulations depends on the number of splits $P$, and that increasing $P$ results in formulations with stronger continuous relaxations. Furthermore, we showed, under some circumstances, the $P$-split formulations converge to an one with equally strong relaxation as the convex hull. The numbers of variables and constraints introduced by $P$-split formulations depend on the numbers of splits and disjunctive terms, rather than on the number of variables present in the disjunctive constraint. The number of splits $P$ is thus a parameter that can be selected to balance the tradeoff between relaxation strength and size of the resulting optimization model.     

The numerical results show a great potential of the $P$-split formulations, by providing a good approximation of the convex hull through a computationally simpler problem. For several the test problems, the intermediate formulations result in a similar number of explored nodes as the convex hull formulation, while reducing the total solution time by an order of magnitude. It is worth mentioning that problems in the computational experiments all have the desired structure described in Assumptions 1--4. Without this structure, the $P$-split formulations do not necessarily have an advantage over either the big-M or convex hull formulation. 
But for problems with a suitable structure, they offer a 
valuable alternative.

\section*{Acknowledgements}
The research was funded by a Newton International Fellowship by the Royal Society (NIF\textbackslash R1\textbackslash 182194) to JK, a grant by the Swedish Cultural Foundation in Finland to JK, a grant by the Swedish Research Council (2022-03502), and by Engineering \& Physical Sciences Research Council (EPSRC) Fellowships to RM and CT (grant numbers EP/P016871/1 and EP/T001577/1). CT also acknowledges support from an Imperial College Research Fellowship. The project was also in-part financially sponsored by  Digital Futures at KTH through JK. Finally, the authors are grateful for the opportunity to spend time at the Oberwolfach workshop “Mixed-integer Nonlinear Optimization: A Hatchery for Modern Mathematics” where we finalized some details of the paper.

\section*{Declarations}
\textbf{Conflict of interest} The authors
have no conflicts of interest to declare that are relevant to the content of this article.

\bibliographystyle{spbasic}       
\bibliography{Ref}  

\clearpage
\section*{Appendix A}
\label{sec:appendix_a}
In Section~\ref{sec:3}, we omitted some details on the illustrative example~\eqref{eq:example1}. Here we provide more details on determining bounds for the split variables, and for clarity, we give the full formulations.
\subsection*{A.1 Bounds on split variables} 

To make the comparison as fair as possible, we have determined the tightest possible bounds for all the split variables without considering interactions between the split variables. Since the 1-split is equivalent to the big-M formulation, these bounds also give us the smallest valid big-M coefficients. 

By considering both the box constraints and the fact that one of the constraints in the two disjunctions must hold, we can derive stronger valid bounds. For a 1-split formulation the bounds are
\begin{equation}
    \begin{aligned}
    &  \ubar{\alpha}^1_1 = 0, && \bar{\alpha}^1_1 = 4\cdot4^2,\\
    &  \ubar{\alpha}^2_1 = -4\cdot 4, && \bar{\alpha}^2_1 = 4\cdot\sqrt{1/4}.\\
    \end{aligned}
\end{equation}
Due to symmetry, the bounds for any balanced 2-split formulation, \ie with the same number of variables in each split, are 
\begin{equation}
    \begin{aligned}
    &  \ubar{\alpha}^1_s = 0, && \bar{\alpha}^1_s = 2\cdot4^2, && \forall s\in \{1,2\},\\
    & \ubar{\alpha}^2_s = -2\cdot 4, && \bar{\alpha}^2_s = 2\cdot\sqrt{1/2} , && \forall s\in \{1,2\}.
    \end{aligned}
\end{equation}
Finally, for a 4-split formulation the bounds are
\begin{equation}
    \begin{aligned}
    &  \ubar{\alpha}^1_s = 0, && \bar{\alpha}^1_s = 4^2, && \forall s\in \{1,2,3,4\},\\
    &  \ubar{\alpha}^2_s = -4, && \bar{\alpha}^2_s = 1 , && \forall s\in \{1,2,3,4\}.
    \end{aligned}
\end{equation}
Note these bounds on the $\alpha_s^2$ variables are not additive (Definition 3).

In practice, we are often not able to obtain the tightest bounds without running, a potentially expensive, optimization-based bounds tightening procedure. Therefore, an interesting question is: how are the $P$ split formulations affected by weaker bounds? By observing that $-1 \leq x_i, \quad \forall i \in\{1, 2, 3, 4\}$, we can derive the following bounds with simple interval arithmetic 
\begin{equation}
\label{eq:non_t_bound}
    \begin{aligned}
    &  \ubar{\alpha}^1_s = 0, && \bar{\alpha}^1_s = (0.5P^2-3.5P +7)\cdot4^2, && \forall s\in \{1,\dots, P\},\\
    &  \ubar{\alpha}^2_s = -(0.5P^2-3.5P +7)\cdot4, && \bar{\alpha}^2_s = (0.5P^2-3.5P +7) , && \forall s\in \{1, \dots, P\},
    \end{aligned}
\end{equation}
where $P \in \{1,2,4\}$ is the number of splits. Using these weaker bounds in the $P$-split formulations results in the relaxations shown in Figure~\ref{fig:relaxations_independent_ap}. The bounds in \eqref{eq:non_t_bound} are additive, and, as shown in the figure, each split results in a strictly tighter relaxation. Compared to Figure~\ref{fig:relaxations}, we observe that the weaker bounds result in overall weaker relaxations, but the hierarchical nature of the P-split formulations remains. 
\begin{figure}[t]
    \centering
    \begin{subfigure}[t]{.32\linewidth}
    \includegraphics[trim={1.5cm 0cm 2cm 0cm},clip,width=.99\linewidth]{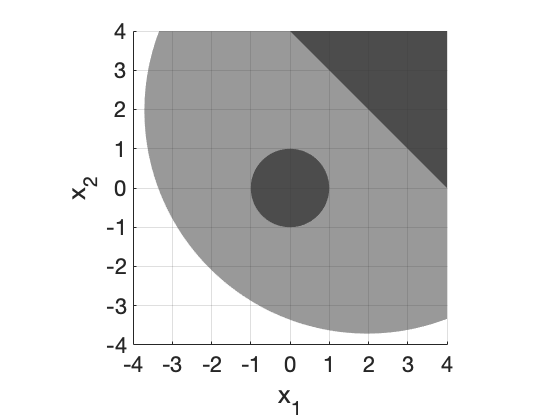}
    \caption{\centering 1-split/big-M $\left(\{x_1, x_2, x_3, x_4\}\right)$}
    \end{subfigure}
    \begin{subfigure}[t]{.32\linewidth}
    \includegraphics[trim={1.5cm 0cm 2cm 0cm},clip,width=.99\linewidth]{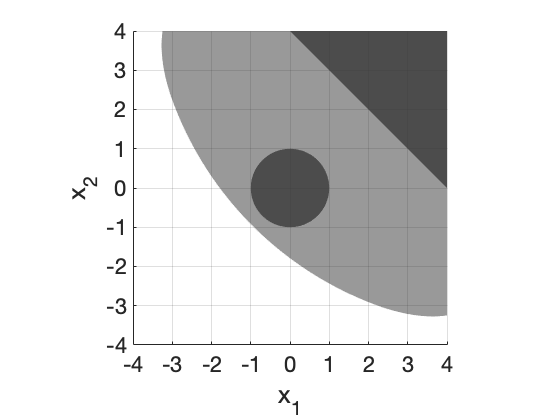}
    \caption{\centering 2-split \newline $\left(\{x_1, x_2\}, \{x_3, x_4\}\right)$}
    \end{subfigure}
    \begin{subfigure}[t]{.32\linewidth}
    \includegraphics[trim={1.5cm 0cm 2cm 0cm},clip,width=.99\linewidth]{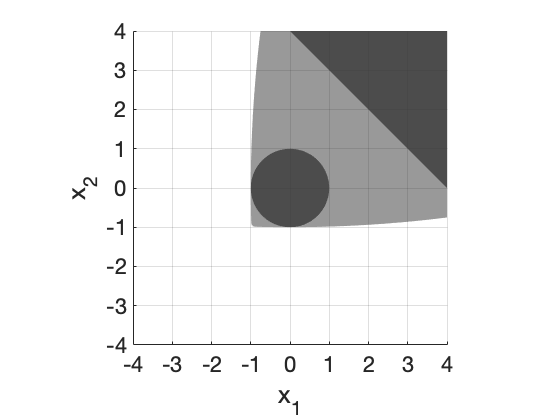}
    \caption{\centering 4-split $\left(\{x_1\},\{x_2\},\{x_3\},\{x_4\}\right)$}
    \end{subfigure}
      \caption{The dark regions show the feasible set of \eqref{eq:example1} in the $x_1,x_2$ space. The light grey areas show the continuously relaxed feasible set of the P-split formulations with the bounds in \eqref{eq:non_t_bound}}. 
    \label{fig:relaxations_independent_ap}
\end{figure}

\subsection*{A.2 Different split formulations}
For the sake of completeness and pedagogical clarity, we provide the full 1-split, 2-split, and 4-split formulations for the illustrative example~\eqref{eq:example1}.

The 1-split formulation is given by
\begin{equation}
    \label{ex1-1-split}
    \begin{aligned}
        & \alpha_1^1 = \nu_1^{\alpha_1^1} + \nu_2^{\alpha_1^1}\\
         & \alpha_1^2 = \nu_1^{\alpha_1^2} + \nu_2^{\alpha_1^2}\\
        &  \nu_1^{\alpha_1^1} \leq \lambda_1\\
        &  \nu_2^{\alpha_1^2} \leq -12\lambda_2\\
        &  \ubar{\alpha}^1_1 \lambda_1\leq \nu_1^{\alpha_1^1} \leq \bar{\alpha}^1_1\lambda_1\\
        &  \ubar{\alpha}^1_1 \lambda_2\leq \nu_2^{\alpha_1^1} \leq \bar{\alpha}^1_1\lambda_2\\
        &  \ubar{\alpha}^2_1 \lambda_1\leq \nu_1^{\alpha_1^2} \leq \bar{\alpha}^2_1\lambda_1\\
        &  \ubar{\alpha}^2_1 \lambda_2\leq \nu_2^{\alpha_1^2} \leq \bar{\alpha}^2_1\lambda_2\\
        & \sum_{i=1}^4 x_i^2 \leq \alpha_1^1, \\ &\sum_{i=1}^4 -x_i \leq \alpha_1^2\\
         & \alpha_1^1, \alpha_1^2, \nu_1^{\alpha_1^1}, \nu_2^{\alpha_1^1},\nu_1^{\alpha_1^2},\nu_2^{\alpha_1^2} \in \mathbb{R}\\
           & \lambda_1, \lambda_2 \in \{0, 1\},  \boldsymbol{x} \in [-4,4]^4. &&          
    \end{aligned}
\end{equation}
Remember as shown by Theorem~\ref{thm:bigM}, the 1-split is equivalent to the big-M in terms of relaxation strength, and can be seen as a \say{complicated} alternative of the big-M formulation.
\vspace{6cm}\\
\phantom{1e}\\

Next the 2-split formulation with the varialbe partitioning $\{x_1,x_2\}$ and $\{x_3,x_4\}$  is given by
\begin{equation}
    \label{ex1-2-split}
    \begin{aligned}
        & \alpha_s^1 = \nu_1^{\alpha_s^1} + \nu_2^{\alpha_s^1} && s \in \{1,2\}\\
         & \alpha_s^2 = \nu_1^{\alpha_s^2} + \nu_2^{\alpha_s^2} && s \in \{1,2\}\\
        &  \sum_{s=1}^2\nu_1^{\alpha_s^1} \leq \lambda_1 &&\\
        &  \sum_{s=1}^2\nu_2^{\alpha_s^2} \leq -12\lambda_2 &&\\
        &  \ubar{\alpha}^1_s \lambda_1\leq \nu_1^{\alpha_s^1} \leq \bar{\alpha}^1_s\lambda_1 && s \in \{1,2\}\\
        &  \ubar{\alpha}^1_s \lambda_2\leq \nu_2^{\alpha_s^1} \leq \bar{\alpha}^1_s\lambda_2 && s \in \{1,2\}\\
        &  \ubar{\alpha}^2_s \lambda_1\leq \nu_1^{\alpha_s^2} \leq \bar{\alpha}^2_s\lambda_1&& s \in \{1,2\}\\
        &  \ubar{\alpha}^2_s \lambda_2\leq \nu_2^{\alpha_s^2} \leq \bar{\alpha}^2_s\lambda_2&& s \in \{1,2\}\\
        & \sum_{i=1}^2 x_i^2 \leq \alpha_1^1, \ \sum_{i=3}^4 x_i^2 \leq \alpha_2^1 \\ &\sum_{i=1}^2 -x_i \leq \alpha_1^2, \ \sum_{i=3}^4 -x_i \leq \alpha_2^2&&\\
        & \alpha_s^1, \alpha_s^2, \nu_1^{\alpha_s^1}, \nu_2^{\alpha_s^1},\nu_1^{\alpha_s^2},\nu_2^{\alpha_s^2} \in \mathbb{R} && s \in \{1,2\} \\ 
        & \lambda_1, \lambda_2 \in \{0, 1\},  \boldsymbol{x} \in [-4,4]^4. &&        
    \end{aligned}
\end{equation}

Finally, the 4-split formulation is given by
\begin{equation}
    \label{ex1-4-split}
    \begin{aligned}
        & \alpha_s^1 = \nu_1^{\alpha_s^1} + \nu_2^{\alpha_s^1} && s \in \{1,2,3,4\}\\
         & \alpha_s^2 = \nu_1^{\alpha_s^2} + \nu_2^{\alpha_s^2} && s \in \{1,2,3,4\}\\
        &  \sum_{s=1}^4\nu_1^{\alpha_s^1} \leq \lambda_1 &&\\
        &  \sum_{s=1}^4\nu_2^{\alpha_s^2} \leq -12\lambda_2 &&\\
        &  \ubar{\alpha}^1_s \lambda_1\leq \nu_1^{\alpha_s^1} \leq \bar{\alpha}^1_s\lambda_1 && s \in \{1,2,3,4\}\\
        &  \ubar{\alpha}^1_s \lambda_2\leq \nu_2^{\alpha_s^1} \leq \bar{\alpha}^1_s\lambda_2 && s \in \{1,2,3,4\}\\
        &  \ubar{\alpha}^2_s \lambda_1\leq \nu_1^{\alpha_s^2} \leq \bar{\alpha}^2_s\lambda_1&& s \in \{1,2,3,4\}\\
        &  \ubar{\alpha}^2_s \lambda_2\leq \nu_2^{\alpha_s^2} \leq \bar{\alpha}^2_s\lambda_2 && s \in \{1,2,3,4\}\\
        &  x_s^2 \leq \alpha_s^1, && s \in \{1,2,3,4\}\\
        & -x_s \leq \alpha_s^2 && s \in \{1,2,3,4\}\\
        & \alpha_s^1, \alpha_s^2, \nu_1^{\alpha_s^1}, \nu_2^{\alpha_s^1},\nu_1^{\alpha_s^2},\nu_2^{\alpha_s^2} \in \mathbb{R} && s \in \{1,2,3,4\} \\ 
        & \lambda_1, \lambda_2 \in \{0, 1\},  \boldsymbol{x} \in [-4,4]^4. &&        
    \end{aligned}
\end{equation}\end{document}